\newtheorem{definition}{Def\text{}inition}[section]
\newtheorem{theorem}[definition]{Theorem}
\newtheorem{example}[definition]{Example}
\newtheorem{lemma}[definition]{Lemma}
\newtheorem{corollary}[definition]{Corollary}
\begin{document}

\title{ \bf \large Variations of star selection principles on Hyperspaces\footnote{The author was supported for this research by Postdoctoral Fellowship Program at UNAM.}}
\author{ \small JAVIER CASAS-DE LA ROSA}
\date{}
\maketitle

\begin{abstract}
In this paper we define some combinatorial principles to characterize spaces $X$ whose hyperspace satisfies some variation of some classical star selection principle. Specifically, the variations characterized are the selective and absolute versions of the star selection principles for the Menger and Rothberger cases; also, the hyperspaces considered in these characterizations are $CL(X)$, $\mathbb{K}(X)$, $\mathbb{F}(X)$ and $\mathbb{CS}(X)$ in both cases, endowed with either the Fell topology or the Vietoris topology.
\end{abstract}

\noindent\emph{Key words.} Hyperspaces, Fell topology, Vietoris topology, star selection principles, (absolutely) strongly star-Menger, (selectively) strongly star-Menger, (absolutely) strongly star-Rothberger, (selectively) strongly star-Rothberger.

\noindent\emph{Mathematics Subject Classification}: Primary 54B20, 54D20; Secondary 54A05, 54A25.

\section{Introduction and preliminaries}

Many branches from Selection Principles Theory have arisen after a systematic research made in \cite{MS1} by Scheepers. Nowadays, this theory has connections as well as applications to several areas of mathematics as General Topology, Function spaces, Hyperspaces, etc. As an example of it, in \cite{GK}, \cite{D},\cite{K'}  and \cite{Z}, the authors studied the fundamental problem on Hyperspaces which consists in establishing dualities between some topological properties or, as it is in this case, between some classical selection principles under different hyperspaces topologies. In other words, given two selection principles $\mathcal{P}$ and $\mathcal{Q}$, one must determine if it holds that a topological space $X$ satisfies the principle $\mathcal{P}$ if and only if its hyperspace satisfies the principle $\mathcal{Q}$. Hence, this duality problem can be viewed as a method to characterize some classical selection principles  of $X$ with selection principles on different hyperspaces of $X$, using several well-known topologies.

On the other hand, several versions of the original selection principles have been defined from its beginning. One of the most important versions that its investigation has rapidly increased is the star versions of the classical selection principles. These star versions were defined by Ko\v{c}inac in \cite{K} and they gave rise to the star selection principles theory (see \cite{K_survey} and \cite{K_Axioms}). 

The study of the duality problem involving some star selection principles on hyperspaces initiated very recently in \cite{xiy} and continued in \cite{CMR}, \cite{CRT2}, \cite{CRT} and \cite{DRT} have dealt with classical star selection principles only. In particular, in these works several combinatorial principles have been defined to establish characterizations for the (strongly) star-Menger property and the (strongly) star-Rothberger property on several hyperspaces under different topologies. In this paper we continue this line of investigation for some variations of some classical star selection principles. In Section \ref{Vietoris section}, the selective and absolute versions of Menger-type and Rothberger-type star selection principle on several hyperspaces with the Vietoris topology are characterized. In Section \ref{Fell section}, analogous characterizations are given for several hyperspaces with the Fell topology.

\subsection{Hyperspaces}

All spaces are assumed to be Hausdorff noncompact and, even, nonparacompact. Given a topological space $X$ , the hyperspace of $X$, denoted by $CL(X)$, is the set of all nonempty closed subsets of $X$. By $\mathbb{K}(X)$ ($\mathbb{F}(X)$), we denote the family of all nonempty compact (all nonempty finite) subsets of $X$. Also, by $\mathbb{CS}(X)$ we denote the family of all convergent sequences of $X$.\\ 

We denote by $\omega$ the first infinite cardinal and for a set $A$, $[A]^{< \omega}$ denotes the set of all finite subsets of $A$. For a subset $U\subseteq X$ and a family $\mathcal{U}$ of subsets of $X$, we write:
\begin{center}
\begin{tabular}{lcl}
 $U^-$ & = & $\{A\in CL(X):A\cap U\neq\emptyset\}$;\\
 $U^+$ & = & $\{A\in CL(X):A\subseteq U\}$;\\
 $U^c$ & = & $X\backslash U$;\\
 $\mathcal{U}^c$ & = & $\{U^c:U\in \mathcal{U}\}$;\\
 $\mathcal{U}^-$ & = & $\{U^-:U\in \mathcal{U}\}$;\\
 $\mathcal{U}^+$ & = & $\{U^+:U\in\mathcal{U}\}$.
\end{tabular} 
\end{center}

In the literature there are many topologies that can be defined on $CL(X)$ or on a subcollection of it. In this paper we will consider two well-known topologies, the Vietoris topology, denoted by $\mathbf{V}$, and the Fell topology, denoted by $\mathbf{F}$.\\
    
\noindent A basic open subset of the Fell topology is of the form:$$(\bigcap_{i=1}^nV_i^-)\cap(K^c)^+,$$ where $V_1,\ldots, V_n$ are open subsets of $X$ and $K$ is a compact subset of $X$.\\

\noindent A basic open subset of the Vietoris topology is a set of the form:$$\langle U_1,\ldots, U_n\rangle=\{A\in CL(X): A\subseteq\bigcup_{i=1}^nU_i, A\cap U_i\neq\emptyset\text{ for each }i\leq n\}$$ where $U_1,\ldots, U_n$ are open subsets of $X$, $n\in\omega$.

\subsection{Variations of the classical star selection principles}

For a set $A\subseteq X$ and a collection $\mathcal{U}$ of subsets of $X$, the star of $A$ with respect to $\mathcal{U}$, denoted by $St(A,\mathcal{U})$, is the set $\bigcup\{U\in\mathcal{U}:U\cap A\neq\emptyset\}$; for $A=\{x\}$ with $x\in X$, we write $St(x,\mathcal{U})$ instead of $St(\{x\},\mathcal{U})$.

In recent years, different versions of the classical star selection principles have been defined and studied in several articles (see for instance \cite{BCKM}, \cite{CDK} and \cite{CG}). Some of these new star selection principles involve dense subsets of the space and they are called as the absolute and selective versions of classical star selection principles (see \cite{K_survey} for more information about the absolute versions and see \cite{CG} for an overview about the selective versions). The selective versions are stronger than the absolute versions and the absolute versions are stronger than the classical star selection principles\footnote{In \cite{CDK}, the authors introduced the absolute versions of classical star selection principles in a general form with a different notation than the one used in \cite{CG}, where it was defined the absolute and selective versions of star selection principles, also given in a general form.}. The following properties are the absolute versions of the strongly star selection principles for the Menger and Rothberger cases.

\begin{definition}[\cite{CDK}]
We say that a space $X$ is:
\begin{enumerate}
    \item absolutely strongly star-Menger ($aSSM$) if for each sequence $\{\mathcal{U}_n:n\in\omega\}$ of open covers of $X$ and each dense subset $D$ of $X$, there is a sequence $\{F_n:n\in\omega\}$ of finite sets such that $F_n\subseteq D$, $n\in\omega$, and $\{St(F_n,\mathcal{U}_n):n\in\omega\}$ is an open cover of $X$.
    \item absolutely strongly star-Rothberger ($aSSR$) if for each sequence $\{\mathcal{U}_n:n\in\omega\}$ of open covers of $X$ and each dense subset $D$ of $X$, there is a sequence $\{x_n:n\in\omega\}$ of points of $X$ such that $x_n\in D$, $n\in\omega$, and $\{St(x_n,\mathcal{U}_n):n\in\omega\}$ is an open cover of $X$.
\end{enumerate}
\end{definition}

Now, we recall the selective versions of the strongly star selection principles for the Menger and Rothberger cases\footnote{The Hurewicz case and some other interesting properties are also given in \cite{CG}}.

\begin{definition}[\cite{CG}]
We say that a space $X$ is:
\begin{enumerate}
    \item selectively strongly star-Menger ($selSSM$) if for each sequence $\{\mathcal{U}_n:n\in\omega\}$ of open covers of $X$ and each sequence $\{D_n:n\in\omega\}$ of dense sets of $X$, there exists a sequence $\{F_n:n\in\omega\}$ of finite sets such that $F_n\subseteq D_n$, $n\in\omega$, and $\{St(F_n,\mathcal{U}_n):n\in\omega\}$ is an open cover of $X$.
    \item selectively strongly star-Rothberger ($selSSR$) if for each sequence $\{\mathcal{U}_n:n\in\omega\}$ of open covers of $X$ and each sequence $\{D_n:n\in\omega\}$ of dense sets of $X$, there exists a sequence $\{x_n:n\in\omega\}$ of points of $X$ such that $x_n\in D_n$, $n\in\omega$, and $\{St(x_n,\mathcal{U}_n):n\in\omega\}$ is an open cover of $X$.
\end{enumerate}
\end{definition}

\section{Absolute and selective versions on hyperspaces with the Vietoris topology}\label{Vietoris section}

In this section we present some technical principles that are useful to characterize some variations of classical star selection principles ($selSSM$, $selSSR$, $aSSM$ and $aSSR$) on several hiperspaces with the Vietoris topology. To that end, we recall some notation and useful definitions to establish these characterizations.\\

\noindent Using the notation of \cite{Z}, $\zeta$ denotes the family:
\[
\zeta =\{(V_1,\ldots,V_n):  V_1,\ldots, V_n\text{  are }\text{open subsets of }X,\ \ n\in\mathbb{N}\}.
\]

\noindent In \cite{Z}, Li defined the notion of $\pi_V$-networks in a space $X$ as follows: 

\medskip

A family $\zeta$ is called a \emph{$\pi_V$-network of $X$} if for each open subset $U$ of $X$, with $U\neq X$, there exist  $(V_1,\ldots, V_n)\in\zeta$ and a finite set $F$ with $F\cap V_i\neq\emptyset$ $(1\leq i\leq n)$ such that $\bigcap_{i=1}^n V_i^c\subseteq U$ and $F\cap U=\emptyset$. The collection of $\pi_V$-networks of a space $X$ is denoted by $\Pi_V$.

\medskip

Henceforth, $\Delta$ will denote a subset of $CL(X)$ which is closed under finite unions and contains all singletons. Using a family $\Delta$, in \cite{DRT}, it was defined a modification of $\pi_V$-network which is called as $\pi_V(\Delta)$-network of $X$.

\begin{definition}[\cite{DRT}]\label{def:pi_Gama_network}
A family $\zeta$ is called a $\pi_V(\Delta)$-network of $X$, if for each $U\in \Delta^c$, there exist a $( V_1,\ldots, V_n)\in\zeta$ and  $F\in [X]^{<\omega}$ with $F\cap V_i\neq\emptyset$ $(1\leq i\leq n)$ such that $\bigcap_{i=1}^n V_i^c\subset U$ and $F\cap U=\emptyset$. The collection of all $\pi_V(\Delta)$-networks of $X$ is denoted by $\Pi_V(\Delta)$. 
\end{definition}

As pointed out in \cite{DRT}, if we consider $\Delta$ to be $CL(X)$, then the collections $\Pi_V(\Delta)$ and $\Pi_V$ coincide. But this fact not need be true in general. In \cite{DRT}, the authors gave an example of a $\pi_V(\Delta)$-network that is not a $\pi_V$-network (on a metrizable space $X$ and a specific family $\Delta$). Here we present another example in a non-metrizable space\footnote{This space was also used in \cite{xiy} to show that $\pi_F(\Delta)$-network and $\pi_F$-network are different notions; see Section \ref{Fell section} for definitions.}.

\begin{example}
There exists a family $\zeta$ on a certain space $X$ and there exists a family $\Delta\subseteq CL(X)$ such that $\zeta$ is a $\pi_V(\Delta)$-network of $X$ but is not a $\pi_V$-network of $X$.
\end{example}
\begin{proof}
Let $\mathcal{A}$ be an uncountable almost disjoint family on $\omega$ with $\omega=\bigcup\mathcal{A}$. We consider the Mr\'owka-Isbell space (see \cite{PM}), $X=\Psi(\mathcal{A})$ and $\Delta=\mathbb{K}(X)$. Let
\begin{center}
    $\zeta=\{(V_1,\ldots,V_n): V_i=\{A_i\}\cup (A_i\setminus D_i)$ $(1\leq i\leq n)$ where $A_1,\ldots, A_n\in\mathcal{A}$, $D_1\ldots, D_n\in [\omega]^{<\omega}$, $n\in\mathbb{N}\}$
\end{center}
Note that $\zeta$ is properly defined, that is, for each $i\in\{1,\dots,n\}$, $\{A_i\}\cup (A_i\setminus D_i)$ is an open set of $X$.\\

\noindent\emph{Claim 1:} $\zeta$ is a $\pi_V(\mathbb{K}(X))$-network of $X$.\\
Let $U\in[\mathbb{K}(X)]^c$. Suppose that $U=X\setminus K_0$ for some $K_0\in\mathbb{K}(X)$. We consider the following two cases:\\
\emph{Case I:} $K_0$ is infinite.\\
Note that in this case, $K_0\cap\mathcal{A}$ is a nonempty finite set. Moreover, the set $(K_0\cap\omega)\setminus(\bigcup\{B:B\in K_0\cap\mathcal{A}\})$ is finite. For each $m\in (K_0\cap\omega)\setminus(\bigcup\{B:B\in K_0\cap\mathcal{A}\})$, let $B_m\in\mathcal{A}$ be such that $m\in B_m$. The collection $\mathcal{B}=(K_0\cap\mathcal{A})\cup\{B_m:m\in (K_0\cap\omega)\setminus(\bigcup\{B:B\in K_0\cap\mathcal{A}\})\}$ is  finite. We enumerate this collection as $\mathcal{B}=\{A_1,\ldots, A_n\}$. We define, for each $i\in\{1,\ldots,n\}$, $V_i=\{A_i\}\cup A_i$. Then, each $V_i$ is an open set of $X$ and therefore, $(V_1,\dots, V_n)\in\zeta$. Note that $K_0\subseteq\bigcup_{i=1}^nV_i$. Thus, $\bigcap_{i=1}^nV_i^c\subseteq U$. Finally, we let $F=(K_0\cap\mathcal{A})\cup[(K_0\cap\omega)\setminus(\bigcup\{B:B\in K_0\cap\mathcal{A}\})]$. Then, $F$ is a finite set of $X$. Moreover, note that $F\cap V_i\neq\emptyset$ $(1\leq i\leq n)$ and $F\cap U=\emptyset$. Therefore, $\zeta$ is a $\pi_V(\mathbb{K}(X))$-network of $X$.\\
\emph{Case II:} $K_0$ is finite.\\
For each $m\in K_0\cap\omega$ (if $K_0\cap\omega\neq\emptyset$), let $B_m\in\mathcal{A}$ such that $m\in B_m$. Then, the collection $\mathcal{B}=(K_0\cap\mathcal{A})\cup\{B_m:m\in K_0\cap\omega\}$ is finite. We enumerate this collection as $\mathcal{B}=\{A_1,\ldots, A_n\}$. We define, for each $i\in\{1,\ldots,n\}$, $V_i=\{A_i\}\cup A_i$. Then, each $V_i$ is an open set of $X$ and therefore, $(V_1,\dots, V_n)\in\zeta$. Note that $K_0\subseteq\bigcup_{i=1}^nV_i$. Hence, $\bigcap_{i=1}^nV_i^c\subseteq U$. Let $F=K_0$. Then, $F$ is a finite set of $X$ such that $F\cap V_i\neq\emptyset$ $(1\leq i\leq n)$ and $F\cap U=\emptyset$. Therefore, $\zeta$ is a $\pi_V(\mathbb{K}(X))$-network of $X$.\\ 

\noindent\emph{Claim 2:} $\zeta$ is not a $\pi_V$-network of $X$.\\
Let $U=\omega$. Then, $U$ is an open set of $X$ with $U\neq X$. Let $(V_1,\dots,V_n)$ be any element of $\zeta$, where $V_i=\{A_i\}\cup (A_i\setminus D_i)$ $(1\leq i\leq n)$ for some $A_1,\ldots, A_n\in\mathcal{A}$, $D_1\ldots, D_n\in [\omega]^{<\omega}$, $n\in\mathbb{N}$. Then, $\bigcap_{i=1}^n[\{A_i\}\cup (A_i\setminus D_i)]^c\nsubseteq U$. Since this fact holds for any element of $\zeta$, we conclude that $\zeta$ is not a $\pi_V$-network of $X$.
\end{proof}

From now on, if $J_n$ is an element in $\Pi_V(\Delta)$, we put:   
\[
J_n=\{(V_{1,s}^n,\ldots,V_{m_s,s}^n):s\in S_n\}.
\]

Another notion, defined also in \cite{DRT}, that involves a family $\Delta$ is the following:

\begin{definition}[\cite{DRT}]\label{definition c_V(Delta)-cover}
Let $(X,\tau)$ be a topological space. A family $\mathcal{U}\subseteq \Delta^c$ is called a \emph{$c_V(\Delta)$-cover of $X$}, if for any open subsets $V_1,\ldots , V_m$ of $X$, there exists $U\in\mathcal{U}$ and $F\in[X]^{<\omega}$ such that for each $i\in\{1,\ldots ,m\}$, $F\cap V_i\neq\emptyset$, $\bigcap_{i=1}^m V_i^c\subseteq U$ and  $F\cap U = \emptyset$. The family of all $c_V(\Delta)$-covers of a space $X$ is denoted by $\mathbb{C}_V(\Delta)$.
\end{definition}


The following two lemmas will be useful in the proofs of next results in this section and the proofs of these lemmas can be easily obtained; we refer the reader to \cite{DRT} for details. The first lemma says how $c_V(\Delta)$-covers on a space $X$ can be viewed as dense subspaces of certain hyperspaces of $X$.

\begin{lemma}[\cite{DRT}]\label{cvdeltas are dense sets}
Let $X$ be a topological space and $\mathcal{U}\subseteq \Delta^c$. Then $\mathcal{U}$ is a $c_V(\Delta)$-cover of $X$ if and only if $\mathcal{U}^c$ is a dense subset of $(\Delta,\mathbf{V})$. 
\end{lemma}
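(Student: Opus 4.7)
My plan is to prove the biconditional by a direct translation between the two conditions, exploiting the bijection $U\leftrightarrow U^c$ between $\mathcal{U}\subseteq\Delta^c$ and $\mathcal{U}^c\subseteq\Delta$. The bridge is the elementary observation that, for (nonempty) open sets $V_1,\ldots,V_m$ of $X$, membership $U^c\in\langle V_1,\ldots,V_m\rangle$ unpacks as
\[
U^c\subseteq \bigcup_{i=1}^m V_i \quad\text{and}\quad U^c\cap V_i\neq\emptyset\ \text{ for each } i,
\]
and the first clause is equivalent, by complementation, to $\bigcap_{i=1}^m V_i^c\subseteq U$; the finite set $F$ in Definition~\ref{definition c_V(Delta)-cover} will be recognized as a transversal collecting points $x_i\in V_i\setminus U$.

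For the forward direction, I would take an arbitrary nonempty basic open set $\langle V_1,\ldots,V_m\rangle\cap\Delta$ of $(\Delta,\mathbf{V})$; since $\Delta$ contains all singletons and is closed under finite unions, nonemptiness forces each $V_i$ to be nonempty. Applying the $c_V(\Delta)$-cover hypothesis to $V_1,\ldots,V_m$ yields $U\in\mathcal{U}$ and $F\in[X]^{<\omega}$, and the translation above gives $U^c\in\langle V_1,\ldots,V_m\rangle\cap\mathcal{U}^c$: the inclusion $\bigcap V_i^c\subseteq U$ becomes $U^c\subseteq\bigcup V_i$, while witnesses $x_i\in F\cap V_i$ lie in $U^c$ thanks to $F\cap U=\emptyset$. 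Hence $\mathcal{U}^c$ is dense in $(\Delta,\mathbf{V})$.

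For the converse, given nonempty open $V_1,\ldots,V_m\subseteq X$, the basic open set $\langle V_1,\ldots,V_m\rangle\cap\Delta$ is nonempty (it contains $\{x_1,\ldots,x_m\}$ for any choice $x_i\in V_i$, which belongs to $\Delta$). Density of $\mathcal{U}^c$ then supplies $U\in\mathcal{U}$ with $U^c\in\langle V_1,\ldots,V_m\rangle$. Choosing $x_i\in U^c\cap V_i$ for each $i$ and setting $F=\{x_1,\ldots,x_m\}$ produces a finite set that meets every $V_i$, misses $U$ (since each $x_i\in U^c$), and satisfies $\bigcap V_i^c\subseteq U$ by complementing $U^c\subseteq\bigcup V_i$; hence $\mathcal{U}$ is a $c_V(\Delta)$-cover of $X$.

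The whole argument is essentially notational bookkeeping with no deep obstacle. The only point demanding any care is that the finite set $F$ plays dual roles in the two directions---a datum provided by the definition on the cover side, and a construction distilled from a Vietoris-close point $U^c$ on the density side---so one must keep the direction of reasoning straight when switching between the two formulations.
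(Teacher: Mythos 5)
Your proof is correct and is exactly the standard complementation argument intended here: the paper itself omits the proof of this lemma (deferring to \cite{DRT}), but its explicit proof of the Fell-topology analogue (Lemma~\ref{kFdeltas are dense sets}) follows the very same pattern of translating $U\leftrightarrow U^c$ between basic Vietoris neighborhoods and the cover condition, with $F$ extracted as a transversal of the $V_i$ inside $U^c$. No substantive difference.
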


The next lemma says how $\pi_V(\Delta)$-networks of a space $X$ can be interpreted as open covers of certain hyperspaces of $X$.

\begin{lemma}[\cite{DRT}]\label{piVdeltas are open covers}
Let $X$ be a topological space and $\zeta = \{ (V_1, \ldots, V_n): V_1, \ldots, V_n$ $\text{ are open subsets of } X, n \in \omega \}.$ Then $\zeta$ is a $\pi_V(\Delta)$-network of $X$ if and only if the collection $\mathcal{U}=\{\langle V_1,\ldots,V_n\rangle: (V_1,\ldots,V_n)\in\zeta\}$ is an open cover of $(\Delta,\mathbf{V})$. 
\end{lemma}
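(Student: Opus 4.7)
The plan is to prove both directions by direct translation between the combinatorial data defining a $\pi_V(\Delta)$-network and the incidence conditions defining membership in a basic Vietoris open set $\langle V_1,\ldots,V_n\rangle$. The key bookkeeping identity I will use throughout is the De Morgan equivalence
\[
\bigcap_{i=1}^n V_i^c \subseteq X\setminus A \quad\Longleftrightarrow\quad A\subseteq \bigcup_{i=1}^n V_i,
\]
together with the trivial equivalence $F\subseteq A \Longleftrightarrow F\cap(X\setminus A)=\emptyset$.

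For the forward direction, I would start with an arbitrary $A\in\Delta$ and set $U=X\setminus A\in\Delta^c$. Applying the hypothesis that $\zeta$ is a $\pi_V(\Delta)$-network to this $U$ produces some $(V_1,\ldots,V_n)\in\zeta$ and a finite set $F$ with $F\cap V_i\neq\emptyset$ for each $i$, $\bigcap_{i=1}^n V_i^c\subseteq U$, and $F\cap U=\emptyset$. The second condition gives $A\subseteq\bigcup_{i=1}^n V_i$ by De Morgan, while the third gives $F\subseteq A$; combined with $F\cap V_i\neq\emptyset$ this yields $A\cap V_i\neq\emptyset$ for every $i$. Hence $A\in\langle V_1,\ldots,V_n\rangle\in\mathcal{U}$, and $\mathcal{U}$ covers $(\Delta,\mathbf{V})$.

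For the converse, I would take $U\in\Delta^c$, write $U=X\setminus A$ for some $A\in\Delta$, and use the hypothesis that $\mathcal{U}$ is an open cover of $\Delta$ to locate $(V_1,\ldots,V_n)\in\zeta$ with $A\in\langle V_1,\ldots,V_n\rangle$. The two conditions defining this basic open set say $A\subseteq\bigcup V_i$ (so $\bigcap V_i^c\subseteq X\setminus A=U$) and $A\cap V_i\neq\emptyset$ for each $i$. Choosing a point $x_i\in A\cap V_i$ for each $i$ and letting $F=\{x_1,\ldots,x_n\}$ produces a finite set with $F\cap V_i\neq\emptyset$ and $F\subseteq A$, i.e.\ $F\cap U=\emptyset$. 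This verifies every clause of Definition \ref{def:pi_Gama_network}, so $\zeta\in\Pi_V(\Delta)$.

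I do not anticipate a genuine obstacle; the lemma is essentially a formal dictionary between the network language and the Vietoris basic-open-set language, and the only subtlety worth stating carefully is that $A\in\Delta$ is used precisely to ensure $X\setminus A\in\Delta^c$ in the forward direction and to supply the witness $A$ in the converse direction — which is exactly why the proof requires $\Delta$ on both sides. No use of the hypothesis that $\Delta$ is closed under finite unions or contains singletons is needed here, as all manipulations stay inside a fixed $A\in\Delta$.
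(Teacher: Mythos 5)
Your proof is correct and is precisely the standard complementation dictionary; the paper itself omits the proof of this lemma (deferring to \cite{DRT}), but the argument it does write out for the Fell-topology analogue, Lemma \ref{piFdeltas are open covers}, is the same two-way translation you carry out, passing between $A\in\Delta$ and $U=A^c\in\Delta^c$ and extracting the finite set $F$ from points of $A\cap V_i$. Your closing observation that neither closure under finite unions nor the presence of singletons in $\Delta$ is used is also accurate.
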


The following selection principle will help us to characterize the selectively strongly star-Menger property on hyperspaces with the Vietoris topology.

\begin{definition}\label{SVMdefinition}
Let $X$ be a topological space. We define:

\medskip

\noindent $\mathbf{SV}_{M}(\Pi_V(\Delta),\Pi_V(\Delta))$: For each sequence $\{J_n:n\in\mathbb{N}\}\subseteq\Pi_V(\Delta)$ and each sequence $\{\mathcal{C}_n:n\in\mathbb{N}\}\subseteq \mathbb{C}_V(\Delta)$, there are finite subsets $\mathcal{V}_n\subseteq\mathcal{C}_n$, $n\in\mathbb{N}$, such that $\mathcal{J}=\bigcup_{n\in\mathbb{N}}\{  (V_{1,s}^n,\ldots,V_{m_s,s}^n)\in J_n:\text{ there exists } U_n\in \mathcal{V}_n \text{ such that }$ $\bigcap_{i=1}^{m_s}(V_{i,s}^n)^c \subseteq U_n, V_{i,s}^n \nsubseteq U_n \ (1\leq i\leq m_s)\}$ is an element of $\Pi_V(\Delta)$.
\end{definition}

\begin{theorem}\label{VselSSMtheorem}
Given a topological space $X$, the following conditions are equivalent:
\begin{enumerate}
\item[(1)] $(\Delta,\mathbf{V})$ is selSSM;
\item[(2)] $X$ satisfies $\mathbf{SV}_{M}(\Pi_V(\Delta),\Pi_V(\Delta))$.
\end{enumerate}
\end{theorem}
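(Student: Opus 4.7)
The plan is to use Lemmas \ref{cvdeltas are dense sets} and \ref{piVdeltas are open covers} as a two-way dictionary between the hyperspace-theoretic data appearing in $selSSM$ for $(\Delta,\mathbf{V})$ and the combinatorial data appearing in $\mathbf{SV}_{M}(\Pi_V(\Delta),\Pi_V(\Delta))$. Specifically, $c_V(\Delta)$-covers $\mathcal{C}$ of $X$ correspond to dense subsets $\mathcal{C}^c$ of $(\Delta,\mathbf{V})$, and $\pi_V(\Delta)$-networks $\zeta$ correspond to open covers of $(\Delta,\mathbf{V})$ via the map $(V_1,\ldots,V_m)\mapsto \langle V_1,\ldots,V_m\rangle$. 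With this dictionary in place, both implications follow the same template, and the only thing to verify by hand is that the extra technical restrictions built into the definition of $\mathbf{SV}_{M}(\Pi_V(\Delta),\Pi_V(\Delta))$ (the conditions $\bigcap_{i=1}^{m_s}(V_{i,s}^n)^c\subseteq U_n$ and $V_{i,s}^n\nsubseteq U_n$) translate exactly to membership in a basic Vietoris open that meets a point of $F_n$.

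For $(1)\Rightarrow(2)$: given sequences $\{J_n\}\subseteq\Pi_V(\Delta)$ and $\{\mathcal{C}_n\}\subseteq\mathbb{C}_V(\Delta)$, set $\mathcal{U}_n=\{\langle V_{1,s}^n,\ldots,V_{m_s,s}^n\rangle:s\in S_n\}$ and $D_n=\mathcal{C}_n^c$. By Lemmas \ref{piVdeltas are open covers} and \ref{cvdeltas are dense sets}, $\mathcal{U}_n$ is an open cover of $(\Delta,\mathbf{V})$ and $D_n$ is dense. Apply $selSSM$ to extract finite $F_n\subseteq D_n$ with $\{St(F_n,\mathcal{U}_n):n\in\omega\}$ covering $\Delta$, and set $\mathcal{V}_n=\{U\in\mathcal{C}_n: U^c\in F_n\}$, a finite subset of $\mathcal{C}_n$. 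To show the resulting $\mathcal{J}$ is in $\Pi_V(\Delta)$, take any $U=C^c\in\Delta^c$. Some $n$ satisfies $C\in St(F_n,\mathcal{U}_n)$, so there is $\langle V_1,\ldots,V_m\rangle\in\mathcal{U}_n$ with $C\in\langle V_1,\ldots,V_m\rangle$ and some $U_n^c\in F_n\cap \langle V_1,\ldots,V_m\rangle$. Unpacking the Vietoris conditions on $U_n^c$ gives $\bigcap_{i}V_i^c\subseteq U_n$ and $V_i\nsubseteq U_n$, so $(V_1,\ldots,V_m)\in\mathcal{J}$; unpacking them on $C$ gives $\bigcap_{i}V_i^c\subseteq U$ and a finite selection $F=\{f_i\}$ with $f_i\in C\cap V_i$, which automatically satisfies $F\cap U=\emptyset$.

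For $(2)\Rightarrow(1)$: given open covers $\{\mathcal{U}_n\}$ and dense sets $\{D_n\}$ of $(\Delta,\mathbf{V})$, we may refine each $\mathcal{U}_n$ to a cover by basic Vietoris opens (shrinking stars only helps), so write $\mathcal{U}_n=\{\langle V_{1,s}^n,\ldots,V_{m_s,s}^n\rangle:s\in S_n\}$. The tuples form a $\pi_V(\Delta)$-network $J_n$ by Lemma \ref{piVdeltas are open covers}, and $\mathcal{C}_n=D_n^c$ is a $c_V(\Delta)$-cover by Lemma \ref{cvdeltas are dense sets}. Apply $\mathbf{SV}_{M}(\Pi_V(\Delta),\Pi_V(\Delta))$ to obtain finite $\mathcal{V}_n\subseteq\mathcal{C}_n$ whose induced $\mathcal{J}$ lies in $\Pi_V(\Delta)$, and set $F_n=\mathcal{V}_n^c\subseteq D_n$. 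To verify that $\{St(F_n,\mathcal{U}_n):n\in\omega\}$ covers $\Delta$, take $C\in\Delta$ and apply the network property to $U=C^c$: we obtain $(V_1,\ldots,V_m)\in\mathcal{J}$ (in some $J_n$, witnessed by $U_n\in\mathcal{V}_n$) together with $F\in[X]^{<\omega}$ satisfying $\bigcap_{i}V_i^c\subseteq C^c$, $F\subseteq C$ and $F\cap V_i\neq\emptyset$. The first condition gives $C\subseteq\bigcup_i V_i$, and since $F\subseteq C$ meets each $V_i$, we get $C\in\langle V_1,\ldots,V_m\rangle\in\mathcal{U}_n$. The defining conditions of $\mathcal{J}$ yield $U_n^c\in\langle V_1,\ldots,V_m\rangle$, and since $U_n^c\in F_n$, this places $C$ inside $St(F_n,\mathcal{U}_n)$.

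The only real obstacle is bookkeeping, namely consistently tracking the involution $A\leftrightarrow A^c$ and ensuring that the three asymmetric conditions $\bigcap V_i^c\subseteq U_n$, $V_i\nsubseteq U_n$, and $F\cap V_i\neq\emptyset$ line up with the two Vietoris conditions ``$A\subseteq\bigcup V_i$'' and ``$A\cap V_i\neq\emptyset$'' applied in turn to $C$ and to the witness $U_n^c$. Once this correspondence is set up, no further topological input beyond the two cited lemmas is needed, and neither direction requires separating cases on the structure of $\Delta$.
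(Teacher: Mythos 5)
Your proposal is correct and follows essentially the same route as the paper: both directions use Lemmas \ref{cvdeltas are dense sets} and \ref{piVdeltas are open covers} as the translation dictionary, pass witnesses back and forth via complementation ($\mathcal{V}_n\leftrightarrow\mathcal{A}_n$), and verify the network/covering property by unpacking the Vietoris membership conditions exactly as in the paper's argument. No substantive differences.
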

\begin{proof}
$(1) \Rightarrow (2)$: Let $\{J_n:n\in\mathbb{N}\}$ be a sequence of $\pi_V(\Delta)$-networks of $X$ and let $\{\mathcal{C}_n: n\in\mathbb{N}\}$ be a sequence of $c_V(\Delta)$-covers of $X$. By Lemma \ref{cvdeltas are dense sets}, the collections $\mathcal{D}_n=\mathcal{C}_n^c$ are dense subsets of $(\Delta,\mathbf{V})$, for each $n\in\mathbb{N}$. Furthermore, if we put, for each $n\in\omega$, $J_n=\{(V_{1,s}^n,\ldots,V_{m_s,s}^n):s\in S_n\}$, then by Lemma \ref{piVdeltas are open covers}, the collections $\mathcal{U}_n=\{\langle V_{1,s}^n,\ldots,V_{m_s,s}^n\rangle:s\in S_n\}$ are open covers of $(\Delta,\mathbf{V})$, for each $n\in\mathbb{N}$. 

Now, applying (1) to the sequence $\{\mathcal{U}_n: n\in\mathbb{N}\}$ and the sequence $\{\mathcal{D}_n:n\in\mathbb{N}\}$, there is a sequence $\{\mathcal{A}_n:n\in\mathbb{N}\}$ of finite sets such that, for each $n\in \mathbb{N}$, $\mathcal{A}_n\subseteq \mathcal{D}_n$ and the collection $\{St(\mathcal{A}_n,\mathcal{U}_n):n\in\mathbb{N}\}$ is an open cover of $(\Delta,\mathbf{V})$. We put, for each $n\in\mathbb{N}$, $\mathcal{V}_n=\{A^c: A\in \mathcal{A}_n\}$. Then, for each $n\in\mathbb{N}$, $\mathcal{V}_n$ is a finite subset of $\mathcal{C}_n$.

Let us show that the collection $\mathcal{J}=\bigcup_{n\in\mathbb{N}}\{(V_{1,s}^n,\ldots,V_{m_s,s}^n)\in J_n: \exists \ U_n\in \mathcal{V}_n \text{ such that }\bigcap_{i=1}^{m_s}(V_{i,s}^n)^c \subseteq U_n, \  V_{i,s}^n\nsubseteq U_n \ (1\leq i\leq m_s)\}$ is a $\pi_V(\Delta)$-network of $X$.

Let $U\in \Delta^c$. Then $U^c\in \Delta$ and therefore, there exists $n_0\in\omega$ such that $U^c\in St(\mathcal{A}_{n_0}, \mathcal{U}_{n_0})$. 
Then, there are $\langle V_{1,s_0}^{n_0},\ldots,V_{m_{s_0},s_0}^{n_0}\rangle\in \mathcal{U}_{n_0}$ and $A_{n_0}\in \mathcal{A}_{n_0}$ so that $\{U^c, A_{n_0}\}\subseteq \langle V_{1,s_0}^{n_0},\ldots,V_{m_{s_0},s_0}^{n_0}\rangle$. Let $U_{n_0}=A^c_{n_0}$. Then $U_{n_0}\in\mathcal{V}_{n_0}$. Since $A_{n_0}$ belongs to $\langle V_{1,s_0}^{n_0},\ldots,V_{m_{s_0},s_0}^{n_0}\rangle$, it follows that $\bigcap_{i=1}^{m_{s_0}}(V_{i,s_0}^{n_0})^c\subseteq U_{n_0},\ \ V_{i,s_0}^{n_0}\nsubseteq U_{n_0} (1\leq i\leq m_{s_0})$; hence $(V_{1,s_0}^{n_0},\ldots,V_{m_{s_0},s_0}^{n_0})\in\mathcal{J}$. On the other hand, using the fact that $U^c$ also belongs to $\langle V_{1,s_0}^{n_0},\ldots,V_{m_{s_0},s_0}^{n_0}\rangle$, we can take, for each $i\in\{1,\ldots, m_{s_0}\}$, $x_i\in U^c\cap V_{i,s_0}^{n_0}$. We put $F=\{x_i : i\in\{1,\ldots ,m_{s_0}\}\}$. Hence, $F\in [X]^{<\omega}$ with $F\cap V_{i,s_0}^{n_0}\neq\emptyset$ and $F\cap U=\emptyset$. 
Moreover, since $U^c\subseteq \bigcup_{i=1}^{m_{s_0}}V_{i,s_0}^{n_0}$, then we obtain that $\bigcap_{i=1}^{m_{s_0}}(V_{i,s_0}^{n_0})^c\subseteq U$. We conclude that $\mathcal{J}\in\Pi_V(\Delta)$.

$(2) \Rightarrow (1)$: Let $\{\mathcal{U}_n:n\in\mathbb{N}\}$ be a sequence of open covers of $(\Delta,\mathbf{V})$ and let $\{\mathcal{D}_n:n\in\mathbb{N}\}$ be a sequence of dense subsets of $(\Delta,\mathbf{V})$. We can assume that each open cover $\mathcal{U}_n$ consists of basic open subsets in $CL(X)$. Thus, put for each $n\in\mathbb{N}$, $\mathcal{U}_n=\{\langle V_{1,s}^n,\ldots,V_{m_s,s}^n\rangle:s\in S_n\}$, where  $V_{i,s}^n$ is an open subset of $X$, for every $n\in\mathbb{N}$, $s\in S_n$ and $i\in\{1,\ldots ,m_s\}$.
Let $J_n=\{(V_{1,s}^n,\ldots,V_{m_s,s}^n):s\in S_n\}$ for every $n\in\mathbb{N}$. By Lemma \ref{piVdeltas are open covers}, note that each $J_n$ is a $\pi_V(\Delta)$-network of $X$. On the other hand, for each $n\in\mathbb{N}$, let $\mathcal{C}_n = \mathcal{D}_n^c$. Thus, by Lemma \ref{cvdeltas are dense sets}, each $\mathcal{C}_n$ is a $c_V(\Delta)$-cover of $X$.

We apply $(2)$ to the sequence of $\pi_V(\Delta)$-networks $\{J_n:n\in \mathbb{N}\}$ and the sequence of $c_V(\Delta)$-covers $\{\mathcal{C}_n:n\in\mathbb{N}\}$ to obtain a sequence $\{\mathcal{V}_n:n\in\mathbb{N}\}$ such that, for each $n\in\mathbb{N}$, $\mathcal{V}_n\in[\mathcal{C}_n]^{<\omega}$, and the collection  $\mathcal{J}=\bigcup_{n\in\mathbb{N}}\{(V_{1,s}^n,\ldots,V_{m_s,s}^n)\in J_n:\text{ there exists } U_n\in \mathcal{V}_n \text{ such that }\bigcap_{i=1}^{m_s}(V_{i,s}^n)^c\subseteq U_n, \  V_{i,s}^n\nsubseteq U_n \ \ (1\leq i\leq m_s)\}$ is a $\pi_V(\Delta)$-network of $X$. For each $n\in\omega$, we define $\mathcal{A}_n=\mathcal{V}^c_{n}$. It follows that $\mathcal{A}_n\in [\mathcal{D}_n]^{<\omega}$, for each $n\in\mathbb{N}$.

Let us show that the collection $\{St(\mathcal{A}_n,\mathcal{U}_n):n\in\mathbb{N}\}$ is an open cover of $(\Delta,\mathbf{V})$. Let $A\in\Delta$. Since $\mathcal{J}$ is a $\pi_V(\Delta)$-network of $X$ and $A^c\in\Delta^c$, there exist $(V_{1,s_0}^{n_0},\ldots, V_{m_{s_0},s_0}^{n_0})\in\mathcal{J}$ (for some $n_0\in\mathbb{N}$ and some $s_0\in S_{n_0}$) and a finite set $F\subseteq X$ such that for every $i\in\{1,\ldots ,m_{s_0}\}$, $F\cap V_{i,s_0}^{n_0}\neq\emptyset$, \   $\bigcap_{i=1}^{m_{s_0}}(V_{i,s_0}^{n_0})^c\subseteq A^c$ and $F\cap A^c=\emptyset$. Since $(V_{1,s_0}^{n_0},\ldots, V_{m_{s_0},s_0}^{n_0})\in\mathcal{J}$, there is $U_{n_0}\in \mathcal{V}_{n_0}$ such that
 $\bigcap_{i=1}^{m_{s_0}}(V_{i,s_0}^{n_0})^c\subseteq U_{n_0}$ and for each $i\in\{1,\ldots ,m_{s_0}\}$, $V_{i,s_0}^{n_0}\nsubseteq U_{n_0}$. It means that $\{A, A_{n_0}\}\subseteq \langle V_{1,s_0}^{n_0},\ldots, V_{m_{s_0},s_0}^{n_0}\rangle\in \mathcal{U}_{n_0}$, where $A_{n_0}=U^c_{n_0}$. Since $A_{n_0}$ is an element of $\mathcal{A}_{n_0}$, we obtain that $A\in St(\mathcal{A}_{n_0},\mathcal{U}_{n_0})$. This shows that the collection $\{St(\mathcal{A}_n,\mathcal{U}_n):n\in\mathbb{N}\}$ is an open cover of $(\Delta,\mathbf{V})$.
\end{proof}

We obtain the following particular cases by taking different choices of our family $\Delta$.

        \begin{corollary}\label{VselSSMcorollary}
    Let $X$ be a topological space. Then:
    \begin{enumerate}
        \item $(CL(X),\mathbf{V})$ is selSSM if and only if $X$ satisfies $\mathbf{SV}_M(\Pi_V,\Pi_V)$;
        \item $(\mathbb{K}(X),\mathbf{V})$ is selSSM if and only if $X$ satisfies $\mathbf{SV}_M(\Pi_V(\mathbb{K}(X)),\Pi_V(\mathbb{K}(X)))$;
        \item $(\mathbb{CS}(X),\mathbf{V})$ is selSSM if and only if $X$ satisfies $\mathbf{SV}_M(\Pi_V(\mathbb{CS}(X)),\Pi_V(\mathbb{CS}(X)))$;
        \item $(\mathbb{F}(X),\mathbf{V})$ is selSSM if and only if $X$ satisfies $\mathbf{SV}_M(\Pi_V(\mathbb{F}(X)),\Pi_V(\mathbb{F}(X)))$.
    \end{enumerate}
    \end{corollary}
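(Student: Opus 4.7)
The plan is to obtain each of the four equivalences as an immediate specialization of Theorem \ref{VselSSMtheorem}, by substituting in turn $\Delta = CL(X)$, $\Delta = \mathbb{K}(X)$, $\Delta = \mathbb{CS}(X)$ and $\Delta = \mathbb{F}(X)$. Since Theorem \ref{VselSSMtheorem} was proved for an arbitrary family $\Delta$ subject only to the standing assumptions (that $\Delta$ is a subcollection of $CL(X)$ that is closed under finite unions and contains all singletons), the whole task reduces to checking that each of the four candidate families satisfies these standing hypotheses and then reading off what $\Pi_V(\Delta)$ becomes in each case.

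The verifications are routine. For $\Delta = CL(X)$ there is nothing to check; moreover, as already recorded in the paragraph following Definition \ref{def:pi_Gama_network}, the collection $\Pi_V(CL(X))$ coincides with $\Pi_V$, which yields the form stated in item (1). For $\Delta = \mathbb{K}(X)$ one uses that $X$ is Hausdorff (hence compact sets are closed), that singletons are compact, and that a finite union of compact sets is compact. For $\Delta = \mathbb{F}(X)$ all three properties are trivial. For $\Delta = \mathbb{CS}(X)$ one uses the convention, standard in this line of work, that a constant sequence is regarded as a convergent sequence (so singletons lie in $\mathbb{CS}(X)$) and that $\mathbb{CS}(X)$ is understood to be closed under finite unions; each such set is compact and therefore closed in the Hausdorff space $X$.

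Once the hypotheses have been verified in each of the four cases, the corollary follows by direct substitution into the biconditional provided by Theorem \ref{VselSSMtheorem}: the left-hand side $(\Delta,\mathbf{V})$ is selSSM specializes to the four hyperspaces listed, and the right-hand side $\mathbf{SV}_M(\Pi_V(\Delta),\Pi_V(\Delta))$ specializes to the four combinatorial principles appearing in items (1)--(4). The only minor point that might be worth a separate remark is the identification $\Pi_V(CL(X)) = \Pi_V$ used in item (1); since that identification was already made in the paper before Definition \ref{def:pi_Gama_network}, no real obstacle is expected, and the whole argument can be written in a single short paragraph.
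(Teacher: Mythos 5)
Your proposal matches the paper's own treatment: the corollary is obtained there simply ``by taking different choices of our family $\Delta$'' in Theorem \ref{VselSSMtheorem}, which is exactly the specialization you carry out. Your additional verification that each of $CL(X)$, $\mathbb{K}(X)$, $\mathbb{CS}(X)$ and $\mathbb{F}(X)$ meets the standing hypotheses on $\Delta$ is a sensible (and slightly more careful) elaboration of the same argument, so there is nothing further to add.
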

    
Let us now define another selection principle to characterize the selectively strongly star-Rothberger property on hyperspaces with the Vietoris topology.

\begin{definition}\label{SVRdefinition}
Let $X$ be a topological space. We define:

\medskip

\noindent $\mathbf{SV}_{R}(\Pi_V(\Delta),\Pi_V(\Delta))$: For each sequence $\{J_n:n\in\mathbb{N}\}\subseteq\Pi_V(\Delta)$ and each sequence $\{\mathcal{C}_n:n\in\mathbb{N}\}\subseteq \mathbb{C}_V(\Delta)$, there is a sequence $\{C_n:n\in\mathbb{N}\}$ with $C_n\in\mathcal{C}_n$, $n\in\mathbb{N}$, such that $\mathcal{J}=\bigcup_{n\in\mathbb{N}}\{  (V_{1,s}^n,\ldots,V_{m_s,s}^n)\in J_n: \bigcap_{i=1}^{m_s}(V_{i,s}^n)^c\subseteq C_n,$ $V_{i,s}^n\nsubseteq C_n \  (1\leq i\leq m_s)\}$
is an element of $\Pi_V(\Delta)$.
\end{definition}

\begin{theorem}\label{VselSSRtheorem}
Given a topological space $X$, the following conditions are equivalent:
\begin{enumerate}
\item[(1)] $(\Delta,\mathbf{V})$ is selSSR;
\item[(2)] $X$ satisfies $\mathbf{SV}_{R}(\Pi_V(\Delta),\Pi_V(\Delta))$.
\end{enumerate}
\end{theorem}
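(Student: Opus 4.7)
The plan is to mirror the proof of Theorem \ref{VselSSMtheorem} essentially line-by-line, with the only change being that the finite selections are replaced by singleton selections: in place of finite subsets $\mathcal{A}_n$ of $\mathcal{D}_n$ and finite subsets $\mathcal{V}_n$ of $\mathcal{C}_n$, I would now work with single elements $A_n\in\mathcal{D}_n$ and $C_n\in\mathcal{C}_n$, related to each other through $C_n=A_n^c$. The engine of the proof remains Lemma \ref{cvdeltas are dense sets} (which translates $c_V(\Delta)$-covers of $X$ into dense subsets of $(\Delta,\mathbf{V})$) and Lemma \ref{piVdeltas are open covers} (which translates $\pi_V(\Delta)$-networks of $X$ into open covers of $(\Delta,\mathbf{V})$).

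For $(1)\Rightarrow(2)$, I would start with a sequence $\{J_n:n\in\mathbb{N}\}\subseteq\Pi_V(\Delta)$ of $\pi_V(\Delta)$-networks, where $J_n=\{(V_{1,s}^n,\ldots,V_{m_s,s}^n):s\in S_n\}$, and a sequence $\{\mathcal{C}_n:n\in\mathbb{N}\}\subseteq\mathbb{C}_V(\Delta)$. Using the two lemmas, produce open covers $\mathcal{U}_n=\{\langle V_{1,s}^n,\ldots,V_{m_s,s}^n\rangle:s\in S_n\}$ of $(\Delta,\mathbf{V})$ and dense sets $\mathcal{D}_n=\mathcal{C}_n^c$. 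Apply the selSSR hypothesis to extract a sequence $\{A_n:n\in\mathbb{N}\}$ with $A_n\in\mathcal{D}_n$ such that $\{St(A_n,\mathcal{U}_n):n\in\mathbb{N}\}$ covers $(\Delta,\mathbf{V})$. Then set $C_n=A_n^c\in\mathcal{C}_n$. To verify that $\mathcal{J}$ is a $\pi_V(\Delta)$-network, take an arbitrary $U\in\Delta^c$ and observe that $U^c\in\Delta$ lies in some $\langle V_{1,s_0}^{n_0},\ldots,V_{m_{s_0},s_0}^{n_0}\rangle\in\mathcal{U}_{n_0}$ that also contains $A_{n_0}$. From $A_{n_0}\in\langle V_{1,s_0}^{n_0},\ldots,V_{m_{s_0},s_0}^{n_0}\rangle$ deduce $\bigcap_{i=1}^{m_{s_0}}(V_{i,s_0}^{n_0})^c\subseteq C_{n_0}$ and $V_{i,s_0}^{n_0}\nsubseteq C_{n_0}$, placing the tuple in $\mathcal{J}$; from $U^c\in\langle V_{1,s_0}^{n_0},\ldots,V_{m_{s_0},s_0}^{n_0}\rangle$ select witnesses $x_i\in U^c\cap V_{i,s_0}^{n_0}$ and let $F=\{x_i:1\le i\le m_{s_0}\}$ to meet the finite-set condition in Definition \ref{def:pi_Gama_network}.

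For $(2)\Rightarrow(1)$, I would start with a sequence $\{\mathcal{U}_n:n\in\mathbb{N}\}$ of open covers of $(\Delta,\mathbf{V})$, which (after passing to a refinement) I may assume consist of basic Vietoris open sets $\langle V_{1,s}^n,\ldots,V_{m_s,s}^n\rangle$, together with a sequence $\{\mathcal{D}_n:n\in\mathbb{N}\}$ of dense subsets. Define $J_n=\{(V_{1,s}^n,\ldots,V_{m_s,s}^n):s\in S_n\}\in\Pi_V(\Delta)$ and $\mathcal{C}_n=\mathcal{D}_n^c\in\mathbb{C}_V(\Delta)$ by the two lemmas. Apply (2) to obtain a sequence $C_n\in\mathcal{C}_n$ producing a $\pi_V(\Delta)$-network $\mathcal{J}$. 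Setting $A_n=C_n^c\in\mathcal{D}_n$, the verification that $\{St(A_n,\mathcal{U}_n):n\in\mathbb{N}\}$ is an open cover of $(\Delta,\mathbf{V})$ proceeds by taking $A\in\Delta$, applying the $\pi_V(\Delta)$-network property of $\mathcal{J}$ to $A^c\in\Delta^c$ to extract $(V_{1,s_0}^{n_0},\ldots,V_{m_{s_0},s_0}^{n_0})\in\mathcal{J}$ and a finite $F$ witnessing it, and checking that both $A$ and $A_{n_0}$ belong to $\langle V_{1,s_0}^{n_0},\ldots,V_{m_{s_0},s_0}^{n_0}\rangle$.

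I do not anticipate any genuine obstacle: once the two translation lemmas are in place, the Rothberger case is a strict simplification of the Menger case in Theorem \ref{VselSSMtheorem}, since one singleton replaces each finite set. The only mild bookkeeping is to ensure that when we pass between $A_n$ and $C_n=A_n^c$, the membership relations ``$A_{n_0}\in\langle V_{1,s_0}^{n_0},\ldots,V_{m_{s_0},s_0}^{n_0}\rangle$'' and ``$\bigcap_{i=1}^{m_{s_0}}(V_{i,s_0}^{n_0})^c\subseteq C_{n_0}$ with $V_{i,s_0}^{n_0}\nsubseteq C_{n_0}$'' really are equivalent; this is where the assumption that $\Delta$ is closed under finite unions and contains all singletons is implicitly used, exactly as in the proof of Theorem \ref{VselSSMtheorem}.
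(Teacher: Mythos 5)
Your proposal is correct and follows essentially the same route as the paper, which itself proves Theorem \ref{VselSSRtheorem} by mimicking the proof of Theorem \ref{VselSSMtheorem} with singletons $D_n\in\mathcal{D}_n$ (your $A_n$) and $C_n=D_n^c\in\mathcal{C}_n$ in place of the finite selections, using Lemmas \ref{cvdeltas are dense sets} and \ref{piVdeltas are open covers} as the translation devices. The only quibble is your closing aside: the equivalence between $A_{n_0}\in\langle V_{1,s_0}^{n_0},\ldots,V_{m_{s_0},s_0}^{n_0}\rangle$ and the pair of conditions $\bigcap_{i=1}^{m_{s_0}}(V_{i,s_0}^{n_0})^c\subseteq C_{n_0}$, $V_{i,s_0}^{n_0}\nsubseteq C_{n_0}$ is a purely set-theoretic complementation fact, not the place where closure of $\Delta$ under finite unions is used.
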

\begin{proof}
$(1) \Rightarrow (2)$: By mimicking the first part of $(1) \Rightarrow (2)$ in the proof of Theorem \ref{VselSSMtheorem}, we can obtain a sequence $\{D_n:n\in\mathbb{N}\}$ of points in $(\Delta,\mathbf{V})$ such that, for each $n\in \mathbb{N}$, $D_n\in \mathcal{D}_n$ and the collection $\{St(D_n,\mathcal{U}_n):n\in\mathbb{N}\}$ is an open cover of $(\Delta,\mathbf{V})$. We put, for each $n\in\mathbb{N}$, $C_n=D_n^c$. Then, $C_n\in\mathcal{C}_n$, for each $n\in\mathbb{N}$.

Let us show that, taking the sequence $\{C_n:n\in\mathbb{N}\}$, the collection $\mathcal{J}=\bigcup_{n\in\mathbb{N}}\{(V_{1,s}^n,\ldots,V_{m_s,s}^n)\in J_n: \bigcap_{i=1}^{m_s}(V_{i,s}^n)^c \subseteq C_n, \  V_{i,s}^n\nsubseteq C_n \ (1\leq i\leq m_s)\}$ is a $\pi_V(\Delta)$-network of $X$.

Let $U\in \Delta^c$. Then there exists $n_0\in\omega$ such that $U^c\in St(D_{n_0}, \mathcal{U}_{n_0})$. 
Thus, $U^c, D_{n_0}\in \langle V_{1,s_0}^{n_0},\ldots,V_{m_{s_0},s_0}^{n_0}\rangle$ for some $\langle V_{1,s_0}^{n_0},\ldots,V_{m_{s_0},s_0}^{n_0}\rangle\in \mathcal{U}_{n_0}$. Since $C_{n_0}=D_{n_0}^c$ and $D_{n_0}\in \langle V_{1,s_0}^{n_0},\ldots,V_{m_{s_0},s_0}^{n_0}\rangle$, it follows that $\bigcap_{i=1}^{m_{s_0}}(V_{i,s_0}^{n_0})^c\subseteq C_{n_0}$ and $ \ V_{i,s_0}^{n_0}\nsubseteq C_{n_0} (1\leq i\leq m_{s_0})$; hence $(V_{1,s_0}^{n_0},\ldots,V_{m_{s_0},s_0}^{n_0})\in\mathcal{J}$. In addition, using that $U^c$ is an element of $\langle V_{1,s_0}^{n_0},\ldots,V_{m_{s_0},s_0}^{n_0}\rangle$, we obtain that $\bigcap_{i=1}^{m_{s_0}}(V_{i,s_0}^{n_0})^c\subseteq U$ and also, we can define a finite subset $F$ of $X$ such that $F\cap V_{i,s_0}^{n_0}\neq\emptyset$ and $F\cap U=\emptyset$. We conclude that $\mathcal{J}\in\Pi_V(\Delta)$.\\

$(2) \Rightarrow (1)$: In the same way as in the first part of $(2) \Rightarrow (1)$ in the proof of Theorem \ref{VselSSMtheorem}, we obtain a sequence $\{C_n:n\in\mathbb{N}\}$ such that, for each $n\in\mathbb{N}$, $C_n\in\mathcal{C}_n$, and the collection  $\mathcal{J}=\bigcup_{n\in\mathbb{N}}\{(V_{1,s}^n,\ldots,V_{m_s,s}^n)\in J_n: \bigcap_{i=1}^{m_s}(V_{i,s}^n)^c\subseteq C_n, \  V_{i,s}^n\nsubseteq C_n \ \ (1\leq i\leq m_s)\}$ is a $\pi_V(\Delta)$-network of $X$. We put $D_n= C^c_{n}$ for each $n\in\omega$. Then, we have that $\{D_n:n\in\mathbb{N}\}$ is a sequence of points in $(\Delta,\mathbf{V})$ with $D_n\in\mathcal{D}_n$, for each $n\in\mathbb{N}$.

Let us show that the collection $\{St(D_n, \mathcal{U}_n):n\in\mathbb{N}\}$ is an open cover of $(\Delta,\mathbf{V})$. Let $A\in\Delta$. There exists $(V_{1,s_0}^{n_0},\ldots, V_{m_{s_0},s_0}^{n_0})\in\mathcal{J}$ (for some $n_0\in\mathbb{N}$ and some $s_0\in S_{n_0}$) and a finite set $F\subseteq X$ such that for every $i\in\{1,\ldots ,m_{s_0}\}$, $F\cap V_{i,s_0}^{n_0}\neq\emptyset$, \   $\bigcap_{i=1}^{m_{s_0}}(V_{i,s_0}^{n_0})^c\subseteq A^c$ and $F\cap A^c=\emptyset$. Using the fact that $(V_{1,s_0}^{n_0},\ldots, V_{m_{s_0},s_0}^{n_0})$ is an element of $\mathcal{J}$, we get that $\bigcap_{i=1}^{m_{s_0}}(V_{i,s_0}^{n_0})^c\subseteq C_{n_0}$ and for each $i\in\{1,\ldots ,m_{s_0}\}$, $V_{i,s_0}^{n_0}\nsubseteq C_{n_0}$. Thus, $A, D_{n_0}$ are in $\langle V_{1,s_0}^{n_0},\ldots, V_{m_{s_0},s_0}^{n_0}\rangle$ which is an element of $ \mathcal{U}_{n_0}$. In other words, $A\in St(D_{n_0},\mathcal{U}_{n_0})$. This shows that the collection $\{St(D_n,\mathcal{U}_n):n\in\mathbb{N}\}$ is an open cover of $(\Delta,\mathbf{V})$.
\end{proof}

We obtain the following particular cases by taking different choices of our family $\Delta$.

\begin{corollary}\label{VselSSRcorollary}
Let $X$ be a topological space. Then:
\begin{enumerate}
    \item $(CL(X),\mathbf{V})$ is selSSR if and only if $X$ satisfies $\mathbf{SV}_R(\Pi_V,\Pi_V)$;
    \item $(\mathbb{K}(X),\mathbf{V})$ is selSSR if and only if $X$ satisfies $\mathbf{SV}_R(\Pi_V(\mathbb{K}(X)),\Pi_V(\mathbb{K}(X)))$;
    \item $(\mathbb{CS}(X),\mathbf{V})$ is selSSR if and only if $X$ satisfies $\mathbf{SV}_R(\Pi_V(\mathbb{CS}(X)),\Pi_V(\mathbb{CS}(X)))$;
    \item $(\mathbb{F}(X),\mathbf{V})$ is selSSR if and only if $X$ satisfies $\mathbf{SV}_R(\Pi_V(\mathbb{F}(X)),\Pi_V(\mathbb{F}(X)))$.
\end{enumerate}
\end{corollary}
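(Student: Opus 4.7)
The plan is to derive Corollary \ref{VselSSRcorollary} as four direct instantiations of Theorem \ref{VselSSRtheorem}. In each case I would set $\Delta$ to be, respectively, $CL(X)$, $\mathbb{K}(X)$, $\mathbb{CS}(X)$, and $\mathbb{F}(X)$, and then invoke the theorem verbatim. The corollary statement is literally what the theorem gives once the symbol $\Delta$ is replaced by each of these concrete choices, so no separate argument is needed beyond checking that each choice is a legitimate value of $\Delta$ in the sense demanded by the paper.

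The only thing that must be verified to license the substitution is that each of the four families is a subset of $CL(X)$ that is closed under finite unions and contains all singletons. For $CL(X)$ this is trivial. For $\mathbb{K}(X)$ it follows because finite unions of compact sets are compact and singletons are compact. For $\mathbb{F}(X)$ it is immediate from the definition. For $\mathbb{CS}(X)$ one uses the standing convention in this line of work that $\mathbb{CS}(X)$ contains the singletons (eventually constant sequences) and is closed under finite unions, as it is employed in \cite{DRT}; this is the only place where a small verification is hidden, and I would cite the corresponding convention rather than reprove it.

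For part (1) I would additionally record the observation already made in the excerpt right after Definition \ref{def:pi_Gama_network}: when $\Delta = CL(X)$, the collections $\Pi_V(\Delta)$ and $\Pi_V$ coincide, so the principle $\mathbf{SV}_R(\Pi_V(CL(X)),\Pi_V(CL(X)))$ is precisely $\mathbf{SV}_R(\Pi_V,\Pi_V)$. With this identification in place, (1) is exactly Theorem \ref{VselSSRtheorem} for $\Delta = CL(X)$, and (2)--(4) are Theorem \ref{VselSSRtheorem} for $\Delta = \mathbb{K}(X),\mathbb{CS}(X),\mathbb{F}(X)$ respectively, with no further rewriting needed.

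There is no real obstacle here; the corollary is a packaging of the theorem into its most useful concrete forms. The only conceptual point worth flagging in the write-up is the $\Delta = CL(X)$ simplification of the notation, so the reader is not left wondering why $\Pi_V$ (without a parameter) appears in (1) while $\Pi_V(\Delta)$ appears in the theorem. A one-line proof in the paper should therefore suffice: apply Theorem \ref{VselSSRtheorem} with $\Delta$ equal to each of the four families, using $\Pi_V(CL(X)) = \Pi_V$ for part (1).
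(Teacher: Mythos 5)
Your proposal is correct and matches the paper exactly: the paper offers no separate proof, simply stating that the corollary follows "by taking different choices of our family $\Delta$" in Theorem \ref{VselSSRtheorem}. Your additional care in checking that each family is closed under finite unions and contains singletons, and in noting the identification $\Pi_V(CL(X))=\Pi_V$ already recorded after Definition \ref{def:pi_Gama_network}, is sound and fills in the (routine) details the paper leaves implicit.
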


Now, let us deal with the absolute versions of star selection principles for Menger and Rothberger cases. We start giving the following principle that will allow us to characterize the absolutely strongly star-Menger property on different hyperspaces with the Vietoris topology.

\begin{definition}\label{AVMdefinition}
Let $X$ be a topological space. We define:

\medskip

\noindent $\mathbf{AV}_{M}(\Pi_V(\Delta),\Pi_V(\Delta))$: For each sequence $\{J_n:n\in\mathbb{N}\}\subseteq\Pi_V(\Delta)$ and each $\mathcal{C}\in\mathbb{C}_V(\Delta)$, there is a sequence $\{\mathcal{V}_n:n\in\mathbb{N}\}\subseteq[\mathcal{C}]^{<\omega}$ such that $\mathcal{J}=\bigcup_{n\in\mathbb{N}}\{  (V_{1,s}^n,\ldots,V_{m_s,s}^n)\in J_n:\text{ there exists } V\in \mathcal{V}_n \text{ such that }\bigcap_{i=1}^{m_s}(V_{i,s}^n)^c\subseteq V, \  V_{i,s}^n\nsubseteq V \  (1\leq i\leq m_s)\}$
is an element of $\Pi_V(\Delta)$.
\end{definition}

By doing some light modifications to the proof of Theorem \ref{VselSSMtheorem}, one can easily prove the following result.

\begin{theorem}\label{VaSSMtheorem}
Given a topological space $X$, the following conditions are equivalent:
\begin{enumerate}
\item[(1)] $(\Delta,\mathbf{V})$ is aSSM;
\item[(2)] $X$ satisfies $\mathbf{AV}_{M}(\Pi_V(\Delta),\Pi_V(\Delta))$.
\end{enumerate}
\end{theorem}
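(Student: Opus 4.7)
The plan is to imitate the proof of Theorem \ref{VselSSMtheorem}, with the single modification that the sequence of dense sets $\{\mathcal{D}_n : n \in \mathbb{N}\}$ on the hyperspace side is replaced by one dense set $\mathcal{D}$, and correspondingly the sequence of $c_V(\Delta)$-covers $\{\mathcal{C}_n : n \in \mathbb{N}\}$ on the space side is replaced by a single $c_V(\Delta)$-cover $\mathcal{C}$. Lemmas \ref{cvdeltas are dense sets} and \ref{piVdeltas are open covers} remain the translation tools between the two sides.

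For the implication $(1)\Rightarrow(2)$, I would start with a sequence $\{J_n : n\in\mathbb{N}\}\subseteq \Pi_V(\Delta)$ and a single $\mathcal{C}\in\mathbb{C}_V(\Delta)$. By Lemma \ref{piVdeltas are open covers}, writing $J_n=\{(V_{1,s}^n,\ldots,V_{m_s,s}^n):s\in S_n\}$, each $\mathcal{U}_n=\{\langle V_{1,s}^n,\ldots,V_{m_s,s}^n\rangle:s\in S_n\}$ is an open cover of $(\Delta,\mathbf{V})$, and by Lemma \ref{cvdeltas are dense sets} the set $\mathcal{D}=\mathcal{C}^c$ is dense in $(\Delta,\mathbf{V})$. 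Applying the $aSSM$ property of $(\Delta,\mathbf{V})$ to $\{\mathcal{U}_n\}$ and the single dense set $\mathcal{D}$ produces finite $\mathcal{A}_n\subseteq\mathcal{D}$ whose stars cover $(\Delta,\mathbf{V})$. Setting $\mathcal{V}_n=\{A^c : A\in\mathcal{A}_n\}\subseteq\mathcal{C}$ (note: all $\mathcal{V}_n$ are now drawn from the same $\mathcal{C}$, which is exactly what $\mathbf{AV}_M$ requires), the verification that the union $\mathcal{J}$ is a $\pi_V(\Delta)$-network of $X$ is word-for-word the same argument as in Theorem \ref{VselSSMtheorem}: pick $U\in\Delta^c$, find $n_0$ and a basic open set in $\mathcal{U}_{n_0}$ containing both $U^c$ and some $A_{n_0}\in\mathcal{A}_{n_0}$, take $U_{n_0}=A_{n_0}^c\in\mathcal{V}_{n_0}$, and build the required finite witness $F$ by picking a point in each $U^c\cap V_{i,s_0}^{n_0}$.

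For $(2)\Rightarrow(1)$, I would begin with a sequence $\{\mathcal{U}_n\}$ of open covers of $(\Delta,\mathbf{V})$, refined to basic open sets $\langle V_{1,s}^n,\ldots,V_{m_s,s}^n\rangle$, together with a single dense subset $\mathcal{D}$ of $(\Delta,\mathbf{V})$. Setting $J_n=\{(V_{1,s}^n,\ldots,V_{m_s,s}^n):s\in S_n\}\in\Pi_V(\Delta)$ and $\mathcal{C}=\mathcal{D}^c\in\mathbb{C}_V(\Delta)$, I apply $\mathbf{AV}_M(\Pi_V(\Delta),\Pi_V(\Delta))$ to the sequence $\{J_n\}$ and the single $c_V(\Delta)$-cover $\mathcal{C}$ to obtain finite $\mathcal{V}_n\subseteq\mathcal{C}$ such that the resulting $\mathcal{J}$ is a $\pi_V(\Delta)$-network. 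Define $\mathcal{A}_n=\mathcal{V}_n^c\subseteq\mathcal{D}$; the verification that $\{St(\mathcal{A}_n,\mathcal{U}_n):n\in\mathbb{N}\}$ is an open cover of $(\Delta,\mathbf{V})$ is identical to the corresponding part of the proof of Theorem \ref{VselSSMtheorem}, using $A^c\in\Delta^c$ to locate a tuple in $\mathcal{J}$ and translating the containments back through complementation.

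I do not expect any genuine obstacle here, since the two principles differ only in whether the covers/dense sets are parametrized by $n$ or fixed. The only thing to watch is bookkeeping: the finite sets $\mathcal{V}_n$ (resp.\ $\mathcal{A}_n$) must lie in the \emph{same} $\mathcal{C}$ (resp.\ $\mathcal{D}$) for every $n$, and this is automatic from the construction above. Once the notational substitution $\mathcal{C}_n\rightsquigarrow\mathcal{C}$, $\mathcal{D}_n\rightsquigarrow\mathcal{D}$ is made, the logical steps carry over verbatim.
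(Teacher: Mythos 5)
Your proposal is correct and takes essentially the same route as the paper, which obtains this theorem precisely by the substitution you describe: a single dense set $\mathcal{D}=\mathcal{C}^c$ in place of the sequence $\{\mathcal{D}_n\}$ (and a single $\mathcal{C}=\mathcal{D}^c$ in the converse direction), with the verification that $\mathcal{J}\in\Pi_V(\Delta)$ and that $\{St(\mathcal{A}_n,\mathcal{U}_n):n\in\mathbb{N}\}$ covers $(\Delta,\mathbf{V})$ carried over verbatim from Theorem \ref{VselSSMtheorem}. Your observation that the finite sets $\mathcal{V}_n$ all land in the same $\mathcal{C}$ automatically is exactly the one point that needs checking, and it holds as you say.
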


Again, some immediate consequences of the previous theorem are obtained by taking different choices of our family $\Delta$.

        \begin{corollary}\label{VaSSMcorollary}
    Let $X$ be a topological space. Then:
    \begin{enumerate}
        \item $(CL(X),\mathbf{V})$ is aSSM if and only if $X$ satisfies $\mathbf{AV}_M(\Pi_V,\Pi_V)$;
        \item $(\mathbb{K}(X),\mathbf{V})$ is aSSM if and only if $X$ satisfies $\mathbf{AV}_M(\Pi_V(\mathbb{K}(X)),\Pi_V(\mathbb{K}(X)))$;
        \item $(\mathbb{CS}(X),\mathbf{V})$ is aSSM if and only if $X$ satisfies $\mathbf{AV}_M(\Pi_V(\mathbb{CS}(X)),\Pi_V(\mathbb{CS}(X)))$;
        \item $(\mathbb{F}(X),\mathbf{V})$ is aSSM if and only if $X$ satisfies $\mathbf{AV}_M(\Pi_V(\mathbb{F}(X)),\Pi_V(\mathbb{F}(X)))$.
    \end{enumerate}
    \end{corollary}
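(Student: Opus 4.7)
The plan is to derive each of (1)--(4) as an immediate specialization of Theorem \ref{VaSSMtheorem} by choosing $\Delta$ appropriately. Concretely, I would take $\Delta$ to be, in turn, $CL(X)$, $\mathbb{K}(X)$, $\mathbb{CS}(X)$, and $\mathbb{F}(X)$, and verify that each of these families meets the standing assumption imposed on $\Delta$ in Section \ref{Vietoris section}, namely that it is a subcollection of $CL(X)$ which is closed under finite unions and contains all singletons.

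This verification is routine in all four cases. For $\Delta = CL(X)$ it is trivial; for $\Delta = \mathbb{K}(X)$ and $\Delta = \mathbb{F}(X)$ it follows because compactness (respectively finiteness) is preserved under finite unions and singletons are compact (respectively finite). For $\Delta = \mathbb{CS}(X)$ one uses the standard convention in this line of work (see \cite{DRT}) that the family of convergent sequences, together with their limits, is closed under finite unions and contains the singletons as trivially convergent sequences.

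Once the framework is confirmed, substituting the chosen $\Delta$ into the biconditional of Theorem \ref{VaSSMtheorem} yields each of the four statements. The only small cosmetic simplification occurs in (1): as remarked immediately after Definition \ref{def:pi_Gama_network}, when $\Delta = CL(X)$ the collection $\Pi_V(\Delta)$ coincides with $\Pi_V$, so the principle $\mathbf{AV}_M(\Pi_V(\Delta),\Pi_V(\Delta))$ reduces to $\mathbf{AV}_M(\Pi_V,\Pi_V)$; for parts (2)--(4) no such abbreviation is available and one simply records the principle with its respective choice of $\Delta$. Since Theorem \ref{VaSSMtheorem} already performs all the substantive work, there is no genuine obstacle here --- the corollary is essentially a reading of that theorem through four different lenses, and the main thing to be careful about is merely checking the closure-under-finite-unions condition in the $\mathbb{CS}(X)$ case.
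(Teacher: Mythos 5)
Your proposal is correct and matches the paper's approach exactly: the paper likewise obtains the corollary by specializing Theorem \ref{VaSSMtheorem} to $\Delta = CL(X)$, $\mathbb{K}(X)$, $\mathbb{CS}(X)$ and $\mathbb{F}(X)$, using the observation that $\Pi_V(CL(X))$ coincides with $\Pi_V$ for part (1). Your explicit check that each choice of $\Delta$ satisfies the standing hypotheses (closed under finite unions, contains all singletons) is a reasonable bit of diligence that the paper leaves implicit.
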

    
Next principle allows us to characterize the absolutely strongly star-Rothberger property on hyperspaces considering the Vietoris topology.

\begin{definition}\label{AVRdefinition}
Let $X$ be a topological space. We define:

\medskip

\noindent $\mathbf{AV}_{R}(\Pi_V(\Delta),\Pi_V(\Delta))$: For each sequence $\{J_n:n\in\mathbb{N}\}\subseteq\Pi_V(\Delta)$ and each $\mathcal{C}\in\mathbb{C}_V(\Delta)$, there is a sequence $\{C_n:n\in\mathbb{N}\}\subseteq\mathcal{C}$, such that $\mathcal{J}=\bigcup_{n\in\mathbb{N}}\{  (V_{1,s}^n,\ldots,V_{m_s,s}^n)$ $\in J_n: \bigcap_{i=1}^{m_s}(V_{i,s}^n)^c\subseteq C_n, \   V_{i,s}^n\nsubseteq C_n \  (1\leq i\leq m_s)\}$ is an element of $\Pi_V(\Delta)$.
\end{definition}

Following same idea as in the proof of Theorem \ref{VselSSRtheorem}, it is easy to prove the following result.

\begin{theorem}\label{VaSSRtheorem}
Given a topological space $X$, the following conditions are equivalent:
\begin{enumerate}
\item[(1)] $(\Delta,\mathbf{V})$ is aSSR;
\item[(2)] $X$ satisfies $\mathbf{AV}_{R}(\Pi_V(\Delta),\Pi_V(\Delta))$.
\end{enumerate}
\end{theorem}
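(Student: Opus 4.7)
The plan is to mimic the proof of Theorem \ref{VselSSRtheorem} almost verbatim, with the single conceptual change that the \emph{sequence} of dense sets (resp.\ $c_V(\Delta)$-covers) from the selective setting collapses to a \emph{single} dense set (resp.\ single $c_V(\Delta)$-cover) in the absolute setting. Lemmas \ref{cvdeltas are dense sets} and \ref{piVdeltas are open covers} still provide the translation: a $c_V(\Delta)$-cover $\mathcal{C}$ of $X$ corresponds to a dense set $D=\mathcal{C}^c$ of $(\Delta,\mathbf{V})$, and a $\pi_V(\Delta)$-network $J_n=\{(V_{1,s}^n,\ldots,V_{m_s,s}^n):s\in S_n\}$ corresponds to an open cover $\mathcal{U}_n=\{\langle V_{1,s}^n,\ldots,V_{m_s,s}^n\rangle:s\in S_n\}$ of $(\Delta,\mathbf{V})$.

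For $(1)\Rightarrow (2)$, starting from $\{J_n:n\in\mathbb{N}\}\subseteq\Pi_V(\Delta)$ and $\mathcal{C}\in\mathbb{C}_V(\Delta)$, I would form the open covers $\mathcal{U}_n$ and the dense subset $D=\mathcal{C}^c$ of $(\Delta,\mathbf{V})$, apply aSSR to obtain a sequence $\{D_n:n\in\mathbb{N}\}\subseteq D$ with $\{St(D_n,\mathcal{U}_n):n\in\mathbb{N}\}$ an open cover of $(\Delta,\mathbf{V})$, and then set $C_n=D_n^c\in\mathcal{C}$. To verify that $\mathcal{J}$ is a $\pi_V(\Delta)$-network of $X$, take $U\in\Delta^c$, note $U^c\in\Delta$ is starred by some $D_{n_0}$ via a basic open set $\langle V_{1,s_0}^{n_0},\ldots,V_{m_{s_0},s_0}^{n_0}\rangle\in\mathcal{U}_{n_0}$; membership of $D_{n_0}$ in that basic open set gives $\bigcap_{i=1}^{m_{s_0}}(V_{i,s_0}^{n_0})^c\subseteq C_{n_0}$ and $V_{i,s_0}^{n_0}\nsubseteq C_{n_0}$, while membership of $U^c$ gives $\bigcap_{i=1}^{m_{s_0}}(V_{i,s_0}^{n_0})^c\subseteq U$ together with a witness finite set $F=\{x_i\}$ with $x_i\in U^c\cap V_{i,s_0}^{n_0}$, which is disjoint from $U$.

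For $(2)\Rightarrow (1)$, given a sequence $\{\mathcal{U}_n:n\in\mathbb{N}\}$ of open covers of $(\Delta,\mathbf{V})$ (WLOG consisting of basic open sets) and a single dense $D\subseteq\Delta$, I would read off $J_n\in\Pi_V(\Delta)$ and take $\mathcal{C}=D^c\in\mathbb{C}_V(\Delta)$; apply $\mathbf{AV}_R(\Pi_V(\Delta),\Pi_V(\Delta))$ to produce $\{C_n:n\in\mathbb{N}\}\subseteq\mathcal{C}$ with $\mathcal{J}$ a $\pi_V(\Delta)$-network, and define $D_n=C_n^c\in D$. For arbitrary $A\in\Delta$ the network property applied to $A^c\in\Delta^c$ selects some $(V_{1,s_0}^{n_0},\ldots,V_{m_{s_0},s_0}^{n_0})\in\mathcal{J}$ and yields $A,D_{n_0}\in\langle V_{1,s_0}^{n_0},\ldots,V_{m_{s_0},s_0}^{n_0}\rangle\in\mathcal{U}_{n_0}$, giving $A\in St(D_{n_0},\mathcal{U}_{n_0})$.

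There is essentially no obstacle beyond bookkeeping: every step is a mechanical adaptation of the corresponding step in the proof of Theorem \ref{VselSSRtheorem}, since the only quantifier that changes (from ``for each sequence of dense subsets'' to ``for each dense subset'') is mirrored symmetrically on both sides of the correspondence. The only minor thing I would be careful to check is that in the $(2)\Rightarrow(1)$ direction, reducing to basic open covers is harmless for star covers, which is standard.
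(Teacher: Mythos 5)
Your proposal is correct and is exactly the argument the paper intends: the paper gives no written proof for this theorem, stating only that it follows by the same idea as Theorem \ref{VselSSRtheorem}, and your adaptation (collapsing the sequence of dense sets/$c_V(\Delta)$-covers to a single one, with Lemmas \ref{cvdeltas are dense sets} and \ref{piVdeltas are open covers} providing the translation) is precisely that adaptation. The reduction to basic open covers is handled the same way in the paper's proof of Theorem \ref{VselSSMtheorem} and is indeed harmless, since passing to a refinement only shrinks stars.
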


By taking different choices of the family $\Delta$, we obtain the following particular cases.

\begin{corollary}\label{VaSSRcorollary}
Let $X$ be a topological space. Then:
    \begin{enumerate}
    \item $(CL(X),\mathbf{V})$ is aSSR if and only if $X$ satisfies $\mathbf{AV}_R(\Pi_V,\Pi_V)$;
    \item $(\mathbb{K}(X),\mathbf{V})$ is aSSR if and only if $X$ satisfies $\mathbf{AV}_R(\Pi_V(\mathbb{K}(X)),\Pi_V(\mathbb{K}(X)))$;
    \item $(\mathbb{CS}(X),\mathbf{V})$ is aSSR if and only if $X$ satisfies $\mathbf{AV}_R(\Pi_V(\mathbb{CS}(X)),\Pi_V(\mathbb{CS}(X)))$;
    \item $(\mathbb{F}(X),\mathbf{V})$ is aSSR if and only if $X$ satisfies $\mathbf{AV}_R(\Pi_V(\mathbb{F}(X)),\Pi_V(\mathbb{F}(X)))$.
    \end{enumerate}
\end{corollary}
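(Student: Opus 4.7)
The plan is to mirror the proof of Theorem \ref{VselSSRtheorem} almost verbatim, observing that the only substantive difference between $selSSR$ and $aSSR$ (resp. $\mathbf{SV}_R$ and $\mathbf{AV}_R$) is that a \emph{sequence} of dense subsets (resp. $c_V(\Delta)$-covers) is replaced by a \emph{single} dense subset (resp. $c_V(\Delta)$-cover) from which all choices are drawn.

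For $(1)\Rightarrow(2)$, I would start from a sequence $\{J_n:n\in\mathbb{N}\}\subseteq\Pi_V(\Delta)$, written as $J_n=\{(V_{1,s}^n,\ldots,V_{m_s,s}^n):s\in S_n\}$, and a single $\mathcal{C}\in\mathbb{C}_V(\Delta)$. By Lemma \ref{piVdeltas are open covers}, each $\mathcal{U}_n=\{\langle V_{1,s}^n,\ldots,V_{m_s,s}^n\rangle:s\in S_n\}$ is an open cover of $(\Delta,\mathbf{V})$, and by Lemma \ref{cvdeltas are dense sets}, $D=\mathcal{C}^c$ is a dense subset of $(\Delta,\mathbf{V})$. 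Applying the $aSSR$ property to $\{\mathcal{U}_n:n\in\mathbb{N}\}$ and $D$ yields a sequence $\{D_n:n\in\mathbb{N}\}$ with $D_n\in D$ and $\{St(D_n,\mathcal{U}_n):n\in\mathbb{N}\}$ an open cover of $(\Delta,\mathbf{V})$. Setting $C_n=D_n^c\in\mathcal{C}$, one then verifies as in the proof of Theorem \ref{VselSSRtheorem} that the associated collection $\mathcal{J}$ is a $\pi_V(\Delta)$-network of $X$: given $U\in\Delta^c$, find $n_0$ with $U^c\in St(D_{n_0},\mathcal{U}_{n_0})$, pick a basic neighborhood $\langle V_{1,s_0}^{n_0},\ldots,V_{m_{s_0},s_0}^{n_0}\rangle\in\mathcal{U}_{n_0}$ containing both $U^c$ and $D_{n_0}$, extract from $D_{n_0}\in\langle\cdots\rangle$ the inclusions $\bigcap_{i=1}^{m_{s_0}}(V_{i,s_0}^{n_0})^c\subseteq C_{n_0}$ and $V_{i,s_0}^{n_0}\nsubseteq C_{n_0}$, and from $U^c\in\langle\cdots\rangle$ construct a finite $F\subseteq X$ witnessing the $\pi_V(\Delta)$-network condition at $U$.

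For $(2)\Rightarrow(1)$, I start from a sequence $\{\mathcal{U}_n:n\in\mathbb{N}\}$ of open covers of $(\Delta,\mathbf{V})$, assumed (without loss of generality) to consist of basic Vietoris open sets $\langle V_{1,s}^n,\ldots,V_{m_s,s}^n\rangle$, and a single dense subset $D$ of $(\Delta,\mathbf{V})$. Lemma \ref{piVdeltas are open covers} gives $J_n\in\Pi_V(\Delta)$, and Lemma \ref{cvdeltas are dense sets} gives $\mathcal{C}=D^c\in\mathbb{C}_V(\Delta)$. Applying $\mathbf{AV}_{R}(\Pi_V(\Delta),\Pi_V(\Delta))$ produces a sequence $\{C_n:n\in\mathbb{N}\}\subseteq\mathcal{C}$ so that the corresponding collection $\mathcal{J}$ is in $\Pi_V(\Delta)$. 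Set $D_n=C_n^c\in D$. To show $\{St(D_n,\mathcal{U}_n):n\in\mathbb{N}\}$ covers $(\Delta,\mathbf{V})$, take $A\in\Delta$; since $A^c\in\Delta^c$ and $\mathcal{J}\in\Pi_V(\Delta)$, there is $(V_{1,s_0}^{n_0},\ldots,V_{m_{s_0},s_0}^{n_0})\in\mathcal{J}$ and a finite $F\subseteq X$ with the network conditions against $A^c$; together with the defining conditions on $(V_{1,s_0}^{n_0},\ldots,V_{m_{s_0},s_0}^{n_0})\in\mathcal{J}$ (namely $\bigcap_i(V_{i,s_0}^{n_0})^c\subseteq C_{n_0}$ and $V_{i,s_0}^{n_0}\nsubseteq C_{n_0}$), this gives $\{A,D_{n_0}\}\subseteq\langle V_{1,s_0}^{n_0},\ldots,V_{m_{s_0},s_0}^{n_0}\rangle\in\mathcal{U}_{n_0}$, hence $A\in St(D_{n_0},\mathcal{U}_{n_0})$.

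No real obstacle is anticipated, since the argument is a direct simplification of the proof of Theorem \ref{VselSSRtheorem}; the only point needing light care is the bookkeeping of the two ``dualities'' $D\leftrightarrow\mathcal{C}=D^c$ and $D_n\leftrightarrow C_n=D_n^c$, and verifying that the inclusion $\bigcap_{i=1}^{m_{s_0}}(V_{i,s_0}^{n_0})^c\subseteq U$ and the finite witness $F$ are produced correctly from the membership $U^c\in\langle V_{1,s_0}^{n_0},\ldots,V_{m_{s_0},s_0}^{n_0}\rangle$, exactly as in the selective case. This is what the author means by ``following the same idea'' and justifies omitting the details.
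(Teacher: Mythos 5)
Your argument is correct and matches the paper's intended route exactly: the corollary is just Theorem \ref{VaSSRtheorem} instantiated at $\Delta = CL(X)$, $\mathbb{K}(X)$, $\mathbb{CS}(X)$, $\mathbb{F}(X)$, and that theorem is proved (as the paper indicates but omits) by precisely the adaptation of the proof of Theorem \ref{VselSSRtheorem} you describe, replacing the sequence of dense sets/$c_V(\Delta)$-covers by a single one via the dualities of Lemmas \ref{cvdeltas are dense sets} and \ref{piVdeltas are open covers}. No gaps.
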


\section{Absolute and selective versions on hyperspaces with the Fell topology}\label{Fell section}

In this section we now introduce some other technical principles which will help us to make some characterizations of same variations considered in previous section on different hyperspaces with the Fell topology. For that end, we will show a couple of lemmas which will be often used in the proofs of theorems in this section; it is worth to mention that these lemmas are the analogous ones to the Lemmas \ref{cvdeltas are dense sets} and \ref{piVdeltas are open covers} mentioned in Section \ref{Vietoris section}. 

Let us first recall the definition of a $\pi_F(\Delta)$-network of a space $X$ (see \cite{xiy}). Let $\xi$ denote the family
$$\xi=\{(K;V_1,\ldots,V_n): K\text{ is a compact subset of }X, \  V_1,\ldots, V_n\text{  are }$$
$$\text{ open subsets of }X\text{  with  }V_i \cap K^{c} \neq \emptyset \  (1\leq i \leq n)\text{ , }n\in\mathbb{N}\}.\\$$

\begin{definition}[\cite{xiy}]\label{definition piFdelta}
Let $(X,\tau)$ be a topological space. A family $\xi$ is called a $\pi_F(\Delta)$-network of $X$, if for each $U\in \Delta^c$, there exist a $(K; V_1,\ldots, V_n)\in\xi$ and a finite set $F$ with $F\cap V_i\neq\emptyset$ $(1\leq i\leq n)$ such that $K\subset U$ and $F\cap U=\emptyset$. The family of all $\pi_F(\Delta)$-network of $X$ is denoted by $\Pi_F(\Delta)$.
\end{definition}

Similarly to Definition \ref{definition c_V(Delta)-cover}, we can define the notion of $k_F(\Delta)$-cover of a space $X$ by doing some light modifications to the notion of $k_F$-cover introduced in \cite{Z}.

\begin{definition}\label{definition k_F(Delta)-cover}
Let $(X,\tau)$ be a topological space. A family $\mathcal{U}\subseteq \Delta^c$ is called a \emph{$k_F(\Delta)$-cover of $X$}, if for any compact subset $K$ of $X$ and open subsets $V_1,\ldots , V_m$ of $X$, there exists $U\in\mathcal{U}$ and $F\in[X]^{<\omega}$ with $F\cap V_i\neq\emptyset$ ($1 \leq i \leq m$) such that $K\subseteq U$ and $F\cap U=\emptyset$. The family of all $k_F(\Delta)$-covers of $X$ is denoted by $\mathbb{K}_F(\Delta)$.
\end{definition}

\vspace{.3cm}

The following lemma says how $k_F(\Delta)$-covers on a space $X$ can be viewed as dense subspaces of certain hyperspaces of $X$ with the Fell topology.

\begin{lemma}\label{kFdeltas are dense sets}
Let $X$ be a topological space and $\mathcal{U}\subseteq \Delta^c$. Then $\mathcal{U}$ is a $k_F(\Delta)$-cover of $X$ if and only if $\mathcal{U}^c$ is a dense subset of $(\Delta,\mathbf{F})$. 
\end{lemma}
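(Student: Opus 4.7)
The plan is to translate between the combinatorial data appearing in the definition of a $k_F(\Delta)$-cover and the incidence data of basic open sets of the Fell topology on $\Delta$. Recall that a basic Fell open set restricted to $\Delta$ has the form $\mathcal{O} = (\bigcap_{i=1}^{m} V_i^-) \cap (K^c)^+$ and contains precisely those $A \in \Delta$ with $A \cap V_i \neq \emptyset$ for each $i \leq m$ and $A \cap K = \emptyset$. The key correspondence is this: if $U \in \mathcal{U} \subseteq \Delta^c$, then $U^c \in \Delta$, and the condition ``$K \subseteq U$'' is equivalent to ``$U^c \cap K = \emptyset$'', while the witnessing finite set $F$ with $F \cap V_i \neq \emptyset$ and $F \cap U = \emptyset$ translates to ``$U^c \cap V_i \neq \emptyset$.''

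For the forward direction, I assume $\mathcal{U}$ is a $k_F(\Delta)$-cover and pick a nonempty basic Fell open $\mathcal{O} = (\bigcap_{i=1}^{m} V_i^-) \cap (K^c)^+$ of $\Delta$. Applying the $k_F(\Delta)$-cover property to the compact $K$ and the open sets $V_1,\ldots,V_m$, I obtain $U \in \mathcal{U}$ and a finite $F$ satisfying the three conditions of Definition \ref{definition k_F(Delta)-cover}. Setting $A := U^c \in \mathcal{U}^c \subseteq \Delta$, the inclusion $K \subseteq U$ gives $A \cap K = \emptyset$, and for each $i$ any point $x_i \in F \cap V_i$ lies in $U^c = A$ (since $F \cap U = \emptyset$), so $A \cap V_i \neq \emptyset$. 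Hence $A \in \mathcal{O} \cap \mathcal{U}^c$, proving $\mathcal{U}^c$ meets every basic Fell open of $\Delta$.

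For the converse, I assume $\mathcal{U}^c$ is dense in $(\Delta,\mathbf{F})$ and fix a compact $K$ and open $V_1,\ldots,V_m$ (with $V_i \cap K^c \neq \emptyset$, which is the nondegeneracy condition implicit in the definition). To apply density I first need the basic Fell open $\mathcal{O} = (\bigcap_{i=1}^{m} V_i^-) \cap (K^c)^+$ to be nonempty in $\Delta$; this is where the hypotheses on $\Delta$ (contains singletons, closed under finite unions) are essential: choosing $x_i \in V_i \cap K^c$ for each $i$, the finite set $\{x_1,\ldots,x_m\}$ belongs to $\Delta$ and witnesses $\mathcal{O} \neq \emptyset$. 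By density there is $B \in \mathcal{U}^c \cap \mathcal{O}$; writing $B = U^c$ with $U \in \mathcal{U}$, the condition $B \cap K = \emptyset$ yields $K \subseteq U$, and picking $y_i \in B \cap V_i$ for each $i$ and setting $F := \{y_1,\ldots,y_m\}$ gives a finite set with $F \cap V_i \neq \emptyset$ and $F \subseteq B = U^c$, hence $F \cap U = \emptyset$, as required.

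There is no real obstacle here; the whole lemma is a dictionary translation, and the only delicate point is ensuring that the basic open $\mathcal{O}$ is nonempty in $\Delta$ so that density can be invoked. That step forces the standing assumption that $\Delta$ contains all singletons and is closed under finite unions; if one were to drop either assumption, the backward implication would fail, so I would emphasize this point in the write-up.
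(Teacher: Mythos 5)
Your proof is correct and follows essentially the same dictionary translation as the paper's: apply the cover condition to the data of a basic Fell open set in one direction, and read off $F$ and $U=A^c$ from a point of $\mathcal{U}^c$ in the chosen basic open set in the other. You additionally make explicit a point the paper leaves implicit, namely that the basic open set $(\bigcap_{i=1}^m V_i^-)\cap(K^c)^+$ is nonempty in $\Delta$ because $\Delta$ contains singletons and is closed under finite unions; this is a worthwhile clarification but does not change the argument.
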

\begin{proof}
$[\rightarrow]$ Let $(\bigcap_{i=1}^nV_i^-)\cap(K^c)^+$ be a nonempty basic open set in $(\Delta,\mathbf{F})$. Since $V_1,\ldots, V_n$ are open subsets of $X$, $K$ is a compact subset of $X$ and $\mathcal{U}$ is a $k_F(\Delta)$-cover, then there exist $U\in\mathcal{U}$ and $F\in[X]^{<\omega}$ with $F\cap V_i\neq\emptyset$ ($1 \leq i \leq m$) such that $K\subseteq U$ and $F\cap U=\emptyset$. Put $A=U^c\in \Delta$. Then, it easy to show that $A$ belongs to $\left[(\bigcap_{i=1}^nV_i^-)\cap(K^c)^+\right]\cap \mathcal{U}^c$. Thus, $\mathcal{U}^c$ is dense set in $(\Delta,\mathbf{F})$.

$[\leftarrow]$ Let $K$ be a compact subset of $X$ and let $V_1,\ldots , V_m$ be open subsets of $X$. Using these sets to define the basic open set $(\bigcap_{i=1}^nV_i^-)\cap(K^c)^+$ of $(\Delta,\mathbf{F})$, we have that $\left[(\bigcap_{i=1}^nV_i^-)\cap(K^c)^+\right]\cap \mathcal{U}^c\neq\emptyset$. So, we take $A\in \left[(\bigcap_{i=1}^nV_i^-)\cap(K^c)^+\right]\cap \mathcal{U}^c$. Using this fact, it is easy to define a finite subset of $X$ so that $F\cap V_i\neq\emptyset$ ($1 \leq i \leq m$). Moreover, if we define $U=A^c$, then $K\subseteq U$ and $F$ can be defined so that $F\cap U=\emptyset$. Thus, $\mathcal{U}$ is a $k_F(\Delta)$-cover of $X$. 
\end{proof}

The next lemma says how $\pi_F(\Delta)$-networks of a space $X$ can be interpreted as open covers of certain hyperspaces of $X$ considering the Fell topology. We remark that both directions in the following lemma were used in some proofs of theorems in \cite{xiy}; for an easier reference of the reader, we explicitly state this fact and give the proof here.

\begin{lemma}\label{piFdeltas are open covers}
Let $X$ be a topological space and $\xi = \{ (K;V_1,\ldots,V_n): \  K\text{ is a }
\newline
\text{compact subset of } X, \  V_1,\ldots, V_n\text{  are open subsets of }X\text{  with  }V_i \cap K^{c} \neq \emptyset \ (1\leq i \leq n)\text{, }n\in\mathbb{N}\}$. Then $\xi$ is a $\pi_F(\Delta)$-network of $X$ if and only if the collection 
$\mathcal{U}=\{(\bigcap_{i=1}^{n}V_i^-)\cap(K^c)^+:(K;V_1,\ldots,V_n) \in \xi\}$ is an open cover of $(\Delta,\mathbf{F})$. 
\end{lemma}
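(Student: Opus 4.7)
The plan is to use the natural duality between $\Delta$ and $\Delta^c$ given by complementation, and to translate the defining conditions of a $\pi_F(\Delta)$-network directly into the conditions for membership in a basic Fell-open set of the form $(\bigcap_{i=1}^n V_i^-)\cap (K^c)^+$.

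For the forward direction, assume $\xi \in \Pi_F(\Delta)$ and let $A \in \Delta$. Set $U = A^c$; since $A \in \Delta$, we have $U \in \Delta^c$. Applying the $\pi_F(\Delta)$-network property to $U$, I obtain $(K; V_1, \ldots, V_n) \in \xi$ and a finite set $F$ with $F \cap V_i \neq \emptyset$ $(1 \leq i \leq n)$, $K \subseteq U$, and $F \cap U = \emptyset$. Then $K \subseteq A^c$ gives $A \subseteq K^c$, so $A \in (K^c)^+$. The condition $F \cap A^c = \emptyset$ forces $F \subseteq A$, and combining with $F \cap V_i \neq \emptyset$ yields $A \cap V_i \neq \emptyset$ for each $i$, hence $A \in \bigcap_{i=1}^n V_i^-$. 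Thus $A$ lies in an element of $\mathcal{U}$.

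For the backward direction, assume $\mathcal{U}$ covers $(\Delta, \mathbf{F})$ and let $U \in \Delta^c$. Then $U^c \in \Delta$, so there exists $(K; V_1, \ldots, V_n) \in \xi$ with $U^c \in (\bigcap_{i=1}^n V_i^-) \cap (K^c)^+$. The inclusion $U^c \subseteq K^c$ gives $K \subseteq U$. For each $i$, since $U^c \cap V_i \neq \emptyset$, pick $x_i \in U^c \cap V_i$ and set $F = \{x_1, \ldots, x_n\}$. By construction $F \cap V_i \neq \emptyset$ $(1 \leq i \leq n)$, and $F \subseteq U^c$ gives $F \cap U = \emptyset$, which is exactly what is required for $\xi$ to be a $\pi_F(\Delta)$-network.

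There is no real obstacle: the proof is a direct unpacking of definitions together with the observation that $A \cap V \neq \emptyset$ is equivalent to the existence of an element of any dense-enough witnessing finite set lying in $V$. The only point worth being careful about is that the elements $(K; V_1, \ldots, V_n)$ of $\xi$ are required to satisfy $V_i \cap K^c \neq \emptyset$, which is automatic from the construction in both directions (in the forward direction, the witnessing $x_i \in F \subseteq A \subseteq K^c$; in the backward direction, the $x_i \in U^c \subseteq K^c$), so the basic Fell-open sets produced are genuinely nonempty and elements of $\xi$ respectively.
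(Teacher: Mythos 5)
Your proof is correct and follows essentially the same argument as the paper: both directions proceed by passing between $A\in\Delta$ and $A^c\in\Delta^c$ and directly translating the $\pi_F(\Delta)$-network conditions ($K\subseteq U$, $F\cap U=\emptyset$, $F\cap V_i\neq\emptyset$) into membership in the basic Fell-open set $(\bigcap_{i=1}^{n}V_i^-)\cap(K^c)^+$, with the same choice of witnessing finite set $F=\{x_1,\ldots,x_n\}$ in the backward direction. No gaps.
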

\begin{proof}
$[\rightarrow]$ Let $B\in\Delta$. Since $\xi$ is a $\pi_F(\Delta)$-network of $X$ and $B^c \in \Delta^c$, then there exists $(K;V_1,\ldots,V_n)\in\xi$ and a finite subset $F$ of $X$ with $F\cap V_i\neq\emptyset$ $(1\leq i\leq n)$ such that $K\subseteq B^c$ and $F\cap B^c=\emptyset$. These conditions imply that $B\subseteq K^c$ and $F\subseteq B$. Since $F\cap V_i\neq\emptyset$ for each $i\in\{1,\dots, n\}$, then $B\cap V_i\neq\emptyset$ for each $i\in\{1,\dots, n\}$. It follows that $B\in (\bigcap_{i=1}^{n}V_i^-)\cap(K^c)^+$ with $(\bigcap_{i=1}^{n}V_i^-)\cap(K^c)^+$ being an element of $\mathcal{U}$. Hence, the collection $\mathcal{U}$ is an open cover of $(\Delta,\mathbf{F})$.

$[\leftarrow]$ Let $U\in\Delta^c$. Since $U^c\in\Delta$ and the collection $\mathcal{U}=\{(\bigcap_{i=1}^{n}V_i^-)\cap(K^c)^+:(K;V_1,\ldots,V_n) \in \xi\}$ is an open cover of $(\Delta,\mathbf{F})$, there exists $(\bigcap_{i=1}^{n}V_i^-)\cap(K^c)^+\in\mathcal{U}$ such that $U^c \in (\bigcap_{i=1}^{n}V_i^-)\cap(K^c)^+$. From this fact, we obtain that $K \subseteq U$ and $U^c\cap V_i\neq \emptyset$ for each $i\in\{1\ldots,n\}$. Hence, we can take $x_i\in U^c\cap V_i$ for each $i\in\{1,\ldots,n\}$ and define $F=\{x_i:1\leq i\leq n\}$. Thus, $F$ is a finite subset of $X$ with $F\cap U=\emptyset$ and $F\cap V_i\neq\emptyset$ $(1\leq i\leq n)$. Since the respective element $(K;V_1,\ldots,V_n)$ is an element of $\xi$, we conclude that $\xi$ is a $\pi_F(\Delta)$-network of $X$.
\end{proof}

The following selection principle will help us to characterize the selectively strongly star-Menger property on hyperspaces with the Fell topology.

\begin{definition}\label{SFMdefinition}
Let $X$ be a topological space. We define:

\medskip

\noindent $\mathbf{SF}_{M}(\Pi_F(\Delta),\Pi_F(\Delta))$: For each sequence $\{J_n:n\in\mathbb{N}\}\subseteq\Pi_F(\Delta)$ and each sequence $\{\mathcal{K}_n:n\in\mathbb{N}\}\subseteq \mathbb{K}_F(\Delta)$, there are finite subsets $\mathcal{W}_n\subseteq\mathcal{K}_n$, $n\in\mathbb{N}$, such that $\mathcal{J}=\bigcup_{n\in\mathbb{N}}\{  (K_s^n;V_{1,s}^n,\ldots,V_{m_s,s}^n)\in J_n:\text{ there exists } W \in \mathcal{W}_n \text{ such that } K_s^n$ $\subseteq W, \  V_{i,s}^n\nsubseteq  W \  (1\leq i\leq m_s)\}$
is an element of $\Pi_F(\Delta)$.
\end{definition}

\begin{theorem}\label{FselSSMtheorem}
Given a topological space $X$, the following conditions are equivalent:
\begin{enumerate}
\item[(1)] $(\Delta,\mathbf{F})$ is selSSM;
\item[(2)] $X$ satisfies $\mathbf{SF}_{M}(\Pi_F(\Delta),\Pi_F(\Delta))$.
\end{enumerate}
\end{theorem}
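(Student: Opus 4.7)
The plan is to imitate the proof of Theorem \ref{VselSSMtheorem} almost verbatim, replacing the Vietoris translation lemmas (Lemmas \ref{cvdeltas are dense sets} and \ref{piVdeltas are open covers}) with their Fell counterparts (Lemmas \ref{kFdeltas are dense sets} and \ref{piFdeltas are open covers}) that were just established. The role of a Vietoris basic open set $\langle V_1,\dots,V_n\rangle$ is taken by a Fell basic open set $\bigl(\bigcap_{i=1}^{n}V_i^{-}\bigr)\cap (K^c)^{+}$, and the role of a tuple $(V_1,\dots,V_n)\in\zeta$ is taken by a tuple $(K;V_1,\dots,V_n)\in\xi$. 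The only structural difference is the presence of the compact set $K$, which appears as the condition $K\subseteq W$ in the selection principle instead of $\bigcap_{i=1}^{m_s}(V_{i,s}^n)^c\subseteq W$.

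For the direction $(1)\Rightarrow(2)$, I would start with a sequence $\{J_n:n\in\mathbb{N}\}\subseteq\Pi_F(\Delta)$ and a sequence $\{\mathcal{K}_n:n\in\mathbb{N}\}\subseteq\mathbb{K}_F(\Delta)$. Writing $J_n=\{(K_s^n;V_{1,s}^n,\dots,V_{m_s,s}^n):s\in S_n\}$, Lemma \ref{piFdeltas are open covers} turns each $J_n$ into an open cover $\mathcal{U}_n=\{(\bigcap_{i=1}^{m_s}V_{i,s}^{n-})\cap((K_s^n)^c)^{+}:s\in S_n\}$ of $(\Delta,\mathbf{F})$, while Lemma \ref{kFdeltas are dense sets} makes each $\mathcal{D}_n=\mathcal{K}_n^c$ dense in $(\Delta,\mathbf{F})$. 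Applying (1) gives finite $\mathcal{A}_n\subseteq\mathcal{D}_n$ with $\{St(\mathcal{A}_n,\mathcal{U}_n):n\in\mathbb{N}\}$ covering $(\Delta,\mathbf{F})$. Setting $\mathcal{W}_n=\{A^c:A\in\mathcal{A}_n\}\subseteq\mathcal{K}_n$, I would show $\mathcal{J}$ is a $\pi_F(\Delta)$-network: given $U\in\Delta^c$, pick $n_0,s_0$ and $A_{n_0}\in\mathcal{A}_{n_0}$ with $\{U^c,A_{n_0}\}\subseteq(\bigcap_{i=1}^{m_{s_0}}V_{i,s_0}^{n_0-})\cap((K_{s_0}^{n_0})^c)^{+}$. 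Membership of $A_{n_0}$ forces $K_{s_0}^{n_0}\subseteq W_{n_0}:=A_{n_0}^c$ and $V_{i,s_0}^{n_0}\nsubseteq W_{n_0}$ (because $A_{n_0}$ meets each $V_{i,s_0}^{n_0}$), so the tuple lies in $\mathcal{J}$; membership of $U^c$ furnishes points $x_i\in U^c\cap V_{i,s_0}^{n_0}$ giving a finite $F$ with $F\cap U=\emptyset$, and the inclusion $U^c\subseteq (K_{s_0}^{n_0})^c$ is equivalent to $K_{s_0}^{n_0}\subseteq U$, completing the network condition of Definition \ref{definition piFdelta}.

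For the direction $(2)\Rightarrow(1)$, starting with open covers $\{\mathcal{U}_n\}$ and dense sets $\{\mathcal{D}_n\}$ in $(\Delta,\mathbf{F})$, I would refine each $\mathcal{U}_n$ to consist of Fell basic open sets, and read off the corresponding $J_n=\{(K_s^n;V_{1,s}^n,\dots,V_{m_s,s}^n):s\in S_n\}$, which is a $\pi_F(\Delta)$-network by Lemma \ref{piFdeltas are open covers}, and let $\mathcal{K}_n=\mathcal{D}_n^c\in\mathbb{K}_F(\Delta)$ by Lemma \ref{kFdeltas are dense sets}. Apply (2) to obtain finite $\mathcal{W}_n\subseteq\mathcal{K}_n$ making $\mathcal{J}$ a $\pi_F(\Delta)$-network, and set $\mathcal{A}_n=\mathcal{W}_n^c\in[\mathcal{D}_n]^{<\omega}$. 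For any $A\in\Delta$, apply the network property at $A^c\in\Delta^c$: one gets a tuple $(K_{s_0}^{n_0};V_{1,s_0}^{n_0},\dots,V_{m_{s_0},s_0}^{n_0})\in\mathcal{J}$ and a witness $W_{n_0}\in\mathcal{W}_{n_0}$ with $K_{s_0}^{n_0}\subseteq W_{n_0}$ and $V_{i,s_0}^{n_0}\nsubseteq W_{n_0}$; then $A_{n_0}:=W_{n_0}^c\in\mathcal{A}_{n_0}$ satisfies $A_{n_0}\subseteq (K_{s_0}^{n_0})^c$ and $A_{n_0}\cap V_{i,s_0}^{n_0}\neq\emptyset$, while the network witness set $F$ and $K_{s_0}^{n_0}\subseteq A^c$ deliver $A\in (\bigcap_{i=1}^{m_{s_0}}V_{i,s_0}^{n_0-})\cap((K_{s_0}^{n_0})^c)^{+}$. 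Hence $\{A,A_{n_0}\}$ lies in this common basic open element of $\mathcal{U}_{n_0}$, so $A\in St(\mathcal{A}_{n_0},\mathcal{U}_{n_0})$.

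The only real subtlety, and where I would be most careful, is bookkeeping the two equivalences simultaneously: the Fell sets ``$K\subseteq W$'' and ``$V_i\nsubseteq W$'' correspond to the membership ``$A\in (K^c)^{+}$'' and ``$A\cap V_i\neq\emptyset$'' under the complementation $W\leftrightarrow A=W^c$, and one must consistently swap $U^c\leftrightarrow A$ for the $\pi_F(\Delta)$-network side and $A\leftrightarrow W^c$ for the $k_F(\Delta)$-cover side. Everything else is purely mechanical once Lemmas \ref{kFdeltas are dense sets} and \ref{piFdeltas are open covers} are in hand.
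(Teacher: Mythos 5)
Your proposal is correct and follows essentially the same route as the paper's own proof: translate the data through Lemmas \ref{kFdeltas are dense sets} and \ref{piFdeltas are open covers}, apply the selSSM property (resp.\ the principle $\mathbf{SF}_{M}$) in the hyperspace, and pass back via complementation, with the verifications of the $\pi_F(\Delta)$-network and star-cover conditions carried out exactly as in the paper. The bookkeeping you flag at the end ($K\subseteq W$ versus $A\in (K^c)^+$ and $V_i\nsubseteq W$ versus $A\cap V_i\neq\emptyset$ under $A=W^c$) is handled identically in the paper's argument.
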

\begin{proof}
$(1) \Rightarrow (2)$: Let $\{J_n:n\in\mathbb{N}\}$ be a sequence of $\pi_F(\Delta)$-networks of $X$ and let $\{\mathcal{K}_n: n\in\mathbb{N}\}$ be a sequence of $k_F(\Delta)$-covers of $X$. For each $n\in\omega$, we denote $J_n=\{(K_s^n; V_{1,s}^n,\ldots,V_{m_s,s}^n):s\in S_n\}$. Applying Lemma \ref{piFdeltas are open covers}, we obtain that, for each $n\in\omega$, the collection $\mathcal{U}_n=\{(\bigcap_{i=1}^{m_s}(V_{i,s}^n)^-)\cap((K_s^n)^c)^+:s\in S_n\}$ is an open cover of $(\Delta, \mathbf{F})$. Furthermore,  by Lemma \ref{kFdeltas are dense sets}, the collection $\mathcal{D}_n=\mathcal{K}_n^c$ is a dense subset of $(\Delta,\mathbf{F})$, for each $n\in\mathbb{N}$.

Thus, applying (1) to the sequence $\{\mathcal{U}_n: n\in\mathbb{N}\}$ and the sequence $\{\mathcal{D}_n:n\in\mathbb{N}\}$, there is a sequence $\{\mathcal{F}_n:n\in\mathbb{N}\}$ of finite sets such that, for each $n\in \mathbb{N}$, $\mathcal{F}_n\subseteq \mathcal{D}_n$ and the collection $\{St(\mathcal{F}_n,\mathcal{U}_n):n\in\mathbb{N}\}$ is an open cover of $(\Delta,\mathbf{F})$. We put, for each $n\in\mathbb{N}$, $\mathcal{W}_n=\{F^c: F\in \mathcal{F}_n\}$. Then, for each $n\in\mathbb{N}$, $\mathcal{W}_n$ is a finite subset of $\mathcal{K}_n$.

Let us show that the collection $\mathcal{J}=\bigcup_{n\in\mathbb{N}}\{(K_s^n;V_{1,s}^n,\ldots,V_{m_s,s}^n)\in J_n: \text{there exists } \ W\in \mathcal{W}_n \text{ such that }K_s^n\subseteq W, \  V_{i,s}^n\nsubseteq  W \  (1\leq i\leq m_s)\}$ is a $\pi_F(\Delta)$-network of $X$.

Let $U\in \Delta^c$. Then $U^c\in \Delta$ and therefore, there exists $n_0\in\omega$ such that $U^c\in St(\mathcal{F}_{n_0}, \mathcal{U}_{n_0})$. 
Thus, there is $(\bigcap_{i=1}^{m_{s_0}}(V_{i,s_0}^{n_0})^-)\cap((K_{s_0}^{n_0})^c)^+\in \mathcal{U}_{n_0}$ such that $\left[(\bigcap_{i=1}^{m_{s_0}}(V_{i,s_0}^{n_0})^-)\cap((K_{s_0}^{n_0})^c)^+\right]\bigcap \mathcal{F}_{n_0}\neq\emptyset$ and $U^c\in (\bigcap_{i=1}^{m_{s_0}}(V_{i,s_0}^{n_0})^-)\cap((K_{s_0}^{n_0})^c)^+$. We fix $F_0\in\left[(\bigcap_{i=1}^{m_{s_0}}(V_{i,s_0}^{n_0})^-)\cap((K_{s_0}^{n_0})^c)^+\right]\bigcap \mathcal{F}_{n_0}$ and define $W_0=F^c_0$. Thus $W_0\in\mathcal{W}_{n_0}$ and, observe that $K_{s_0}^{n_0}\subseteq W_0$ and $V_{i,s_0}^{n_0}\nsubseteq W_0 (1\leq i\leq m_{s_0})$ are obtained from the fact that $F_0\in(\bigcap_{i=1}^{m_{s_0}}(V_{i,s_0}^{n_0})^-)\cap((K_{s_0}^{n_0})^c)^+$. It follows that the corresponding element $(K_{s_0}^{n_0};V_{1,s_0}^{n_0},\ldots,V_{m_{s_0},s_0}^{n_0})$ belongs to $\mathcal{J}$. In addition, from the fact that $U^c\in \bigcap_{i=1}^{m_{s_0}}(V_{i,s_0}^{n_0})^-$, we can fix $x_i\in U^c\cap V_{i,s_0}^{n_0}$ ($i\in\{1,\ldots, m_{s_0}\}$) and let $M=\{x_i : i\in\{1,\ldots ,m_{s_0}\}\}$. So, $M$ is a finite subset of $X$ that satisfies $M\cap V_{i,s_0}^{n_0}\neq\emptyset$ ($1\leq i \leq m_{s_0}$) and $M\cap U=\emptyset$. 
On the other hand, $K_{s_0}^{n_0}\subseteq U$ follows from $U^c\in((K_{s_0}^{n_0})^c)^+$. We conclude that $\mathcal{J}\in\Pi_F(\Delta)$.

$(2) \Rightarrow (1)$: Let $\{\mathcal{U}_n:n\in\mathbb{N}\}$ be a sequence of open covers of $(\Delta,\mathbf{F})$ and let $\{\mathcal{D}_n:n\in\mathbb{N}\}$ be a sequence of dense subsets of $(\Delta,\mathbf{F})$. Without loss of generality, suppose that the elements of each open cover $\mathcal{U}_n$ are basic open subsets in $CL(X)$. Therefore, let us denote, for each $n\in\mathbb{N}$, $\mathcal{U}_n=\{(\bigcap_{i=1}^{m_s}(V_{i,s}^n)^-)\cap((K_s^n)^c)^+:s\in S_n\}$, where each $V_{i,s}^n$ is an open subset of $X$ and $K_s^n$ is a compact subset of $X$ (for every $n\in\mathbb{N}$, $s\in S_n$ and $i\in\{1,\ldots ,m_s\}$).
For each $n\in\mathbb{N}$, let $J_n=\{(K_s^n; V_{1,s}^n,\ldots,V_{m_s,s}^n):s\in S_n\}$. Then, by Lemma \ref{piFdeltas are open covers}, we get that each $J_n$ is a $\pi_F(\Delta)$-network of $X$. Moreover, if we define, for each $n\in\mathbb{N}$, $\mathcal{K}_n = \mathcal{D}_n^c$, then each $\mathcal{K}_n$ is a $k_F(\Delta)$-cover of $X$ by Lemma \ref{kFdeltas are dense sets}.

Applying $(2)$ to the sequence of $\pi_F(\Delta)$-networks $\{J_n:n\in \mathbb{N}\}$ and the sequence of $k_F(\Delta)$-covers $\{\mathcal{K}_n:n\in\mathbb{N}\}$ we obtain a sequence $\{\mathcal{W}_n:n\in\mathbb{N}\}$ with $\mathcal{W}_n$ a finite subset of $\mathcal{K}_n$, for each $n\in\mathbb{N}$, and so that the collection  $\mathcal{J}=\bigcup_{n\in\mathbb{N}}\{  (K_s^n;V_{1,s}^n,\ldots,V_{m_s,s}^n)\in J_n:\text{ there exists } W \in \mathcal{W}_n \text{ such that }K_s^n\subseteq W, \  V_{i,s}^n\nsubseteq  W \  (1\leq i\leq m_s)\}$ is a $\pi_F(\Delta)$-network of $X$. We define, for each $n\in\omega$, $\mathcal{F}_n=\mathcal{W}^c_{n}$. Then $\mathcal{F}_n$ is a finite subset of $\mathcal{D}_n$, for each $n\in\mathbb{N}$.

Let us show that the collection $\{St(\mathcal{F}_n,\mathcal{U}_n):n\in\mathbb{N}\}$ is an open cover of $(\Delta,\mathbf{F})$. Let $A\in\Delta$. Using the fact that $\mathcal{J}$ is a $\pi_F(\Delta)$-network of $X$ with the set $A^c\in\Delta^c$, we obtain an element $(K_{s_0}^{n_0};V_{1,s_0}^{n_0},\ldots,V_{m_{s_0},s_0}^{n_0})$ of $\mathcal{J}$ (for some $n_0\in\mathbb{N}$ and some $s_0\in S_{n_0}$) and a finite set $F\subseteq X$ such that for every $i\in\{1,\ldots ,m_{s_0}\}$, $F\cap V_{i,s_0}^{n_0}\neq\emptyset$, \   $K_{s_0}^{n_0}\subseteq A^c$ and $F\cap A^c=\emptyset$. It easily follows from these last conditions that $A$ belongs to $(\bigcap_{i=1}^{m_{s_0}}(V_{i,s_0}^{n_0})^-)\cap((K_{s_0}^{n_0})^c)^+$. On the other hand, since $(K_{s_0}^{n_0};V_{1,s_0}^{n_0},\ldots,V_{m_{s_0},s_0}^{n_0})\in \mathcal{J}$, there exists $W_0\in \mathcal{W}_{n_0}$ such that $K_{s_0}^{n_0}\subseteq W_0$ and for each $i\in\{1,\ldots ,m_{s_0}\}$, $V_{i,s_0}^{n_0}\nsubseteq W_0$. These conditions imply that the element $F_0 = W_0^c$ belongs to $(\bigcap_{i=1}^{m_{s_0}}(V_{i,s_0}^{n_0})^-)\cap((K_{s_0}^{n_0})^c)^+$. Notice that $F_0\in \mathcal{F}_{n_0}$ and $(\bigcap_{i=1}^{m_{s_0}}(V_{i,s_0}^{n_0})^-)\cap((K_{s_0}^{n_0})^c)^+\in\mathcal{U}_{n_0}$. It means that $A\in St(\mathcal{F}_{n_0},\mathcal{U}_{n_0})$. Therefore, $\{St(\mathcal{F}_n,\mathcal{U}_n):n\in\mathbb{N}\}$ is an open cover of $(\Delta,\mathbf{F})$.
\end{proof}

As a consequence of previous theorem, we can characterize the selectively strongly star-Menger property on different hyperspaces considered with the Fell topology.

        \begin{corollary}\label{FselSSMcorollary}
    Let $X$ be a topological space. Then:
    \begin{enumerate}
        \item $(CL(X),\mathbf{F})$ is selSSM if and only if $X$ satisfies $\mathbf{SF}_M(\Pi_F,\Pi_F)$;
        \item $(\mathbb{K}(X),\mathbf{F})$ is selSSM if and only if $X$ satisfies $\mathbf{SF}_M(\Pi_F(\mathbb{K}(X)),\Pi_F(\mathbb{K}(X)))$;
        \item $(\mathbb{CS}(X),\mathbf{F})$ is selSSM if and only if $X$ satisfies $\mathbf{SF}_M(\Pi_F(\mathbb{CS}(X)),\Pi_F(\mathbb{CS}(X)))$;
        \item $(\mathbb{F}(X),\mathbf{F})$ is selSSM if and only if $X$ satisfies $\mathbf{SF}_M(\Pi_F(\mathbb{F}(X)),\Pi_F(\mathbb{F}(X)))$.
    \end{enumerate}
    \end{corollary}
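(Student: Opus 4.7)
The plan is to deduce all four items as direct instances of Theorem \ref{FselSSMtheorem} by varying the standing family $\Delta$. Since the theorem was proved for an arbitrary $\Delta\subseteq CL(X)$ that is closed under finite unions and contains all singletons, the only work is to check that each of the families $CL(X)$, $\mathbb{K}(X)$, $\mathbb{CS}(X)$, $\mathbb{F}(X)$ meets these conditions, and then to unwind the notation so that the displayed principles $\mathbf{SF}_M(\Pi_F,\Pi_F)$, $\mathbf{SF}_M(\Pi_F(\mathbb{K}(X)),\Pi_F(\mathbb{K}(X)))$, etc., agree with $\mathbf{SF}_M(\Pi_F(\Delta),\Pi_F(\Delta))$ under the relevant choice of $\Delta$.

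First I would handle item $(1)$: take $\Delta=CL(X)$. Then the family $CL(X)$ is closed under finite unions (finite unions of closed sets are closed) and contains all singletons (singletons are closed in a Hausdorff space). Moreover, mirroring the observation made right after Definition \ref{def:pi_Gama_network} for $\Pi_V$, when $\Delta=CL(X)$ the notion of $\pi_F(\Delta)$-network from Definition \ref{definition piFdelta} reduces to the classical $\pi_F$-network, so $\Pi_F(\Delta)=\Pi_F$. Substituting this identification into Theorem \ref{FselSSMtheorem} yields precisely the stated equivalence. Items $(2)$ and $(4)$ are even more direct: the families $\mathbb{K}(X)$ (compact sets) and $\mathbb{F}(X)$ (finite sets) are both visibly closed under finite unions and contain all singletons, and no further identification of $\Pi_F(\Delta)$ is needed because the statement already uses the indexed notation $\Pi_F(\mathbb{K}(X))$ and $\Pi_F(\mathbb{F}(X))$.

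Item $(3)$ is the one place where a small check is genuinely required. Here $\Delta=\mathbb{CS}(X)$ denotes the family of nonempty convergent sequences (with their limits included, under the convention that every finite set is trivially a convergent sequence). Singletons are obviously in $\mathbb{CS}(X)$, and the closure under finite unions follows because the union of finitely many convergent sequences is again a compact countable set which is either finite or, in the Hausdorff setting, admits at most finitely many accumulation points and can be rewritten as a convergent sequence in the accepted sense; this is precisely why $\mathbb{CS}(X)$ is used as a standing $\Delta$ in the earlier papers cited in the introduction. Once this is verified, Theorem \ref{FselSSMtheorem} applies verbatim.

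The hard part, if any, is the bookkeeping around item $(3)$ and the remark that $\Pi_F(CL(X))=\Pi_F$ in item $(1)$; neither involves a new idea beyond those already deployed in the proof of Theorem \ref{FselSSMtheorem}. Consequently the entire corollary can be stated as: apply Theorem \ref{FselSSMtheorem} with $\Delta$ equal successively to $CL(X)$, $\mathbb{K}(X)$, $\mathbb{CS}(X)$ and $\mathbb{F}(X)$, using the (routine) verification that each such $\Delta$ satisfies the standing hypotheses and, in the first case, the identification $\Pi_F(CL(X))=\Pi_F$.
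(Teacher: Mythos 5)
Your proposal matches the paper's (implicit) proof exactly: the corollary is obtained by instantiating Theorem \ref{FselSSMtheorem} with $\Delta$ equal to $CL(X)$, $\mathbb{K}(X)$, $\mathbb{CS}(X)$ and $\mathbb{F}(X)$ in turn, together with the identification $\Pi_F(CL(X))=\Pi_F$, and the paper offers no further argument. The one caveat concerns your justification in item (3): a finite union of convergent sequences with distinct limits is not literally a single convergent sequence, so closure of $\mathbb{CS}(X)$ under finite unions holds only under a relaxed convention --- but this is an issue with the paper's standing hypotheses on $\Delta$ rather than a defect introduced by your argument.
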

    
For the Rothberger case we define the following selection principle to get, in the same way, characterizations of the selectively strongly star-Rothberger property on hyperspaces with the Fell topology.

\begin{definition}\label{SFRdefinition}
Let $X$ be a topological space. We define:

\medskip

\noindent $\mathbf{SF}_{R}(\Pi_F(\Delta),\Pi_F(\Delta))$: For each sequence $\{J_n:n\in\omega\}\subseteq\Pi_F(\Delta)$ and each sequence $\{\mathcal{K}_n:n\in\omega\}\subseteq \mathbb{K}_F(\Delta)$, there is a sequence $\{K_n:n\in\omega\}$ with $K_n\in\mathcal{K}_n$, $n\in\omega$, such that $\mathcal{J}=\bigcup_{n\in\mathbb{N}}\{  (K_s^n;V_{1,s}^n,\ldots,V_{m_s,s}^n)\in J_n:K_s^n\subseteq K_n, \  V_{i,s}^n\nsubseteq K_n \  (1\leq i\leq m_s)\}$
is an element of $\Pi_F(\Delta)$.
\end{definition}

\begin{theorem}\label{FselSSRtheorem}
Given a topological space $X$, the following conditions are equivalent:
\begin{enumerate}
\item[(1)] $(\Delta,\mathbf{F})$ is selSSR;
\item[(2)] $X$ satisfies $\mathbf{SF}_{R}(\Pi_F(\Delta),\Pi_F(\Delta))$.
\end{enumerate}
\end{theorem}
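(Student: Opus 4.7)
The plan is to essentially adapt the proof of Theorem \ref{FselSSMtheorem} by replacing finite sets with singletons, following the same translation pattern used between Theorems \ref{VselSSMtheorem} and \ref{VselSSRtheorem}. Both directions rely on the two translation Lemmas \ref{kFdeltas are dense sets} and \ref{piFdeltas are open covers}, which act as the dictionary between combinatorial data on $X$ and topological data on $(\Delta,\mathbf{F})$.

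For $(1)\Rightarrow(2)$, I would start with a sequence $\{J_n:n\in\omega\}\subseteq\Pi_F(\Delta)$, writing $J_n=\{(K_s^n;V_{1,s}^n,\ldots,V_{m_s,s}^n):s\in S_n\}$, and a sequence $\{\mathcal{K}_n:n\in\omega\}\subseteq\mathbb{K}_F(\Delta)$. By Lemma \ref{piFdeltas are open covers}, each $\mathcal{U}_n=\{(\bigcap_{i=1}^{m_s}(V_{i,s}^n)^-)\cap((K_s^n)^c)^+:s\in S_n\}$ is an open cover of $(\Delta,\mathbf{F})$, and by Lemma \ref{kFdeltas are dense sets}, each $\mathcal{D}_n=\mathcal{K}_n^c$ is dense in $(\Delta,\mathbf{F})$. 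Applying $selSSR$ yields a sequence $\{D_n:n\in\omega\}$ with $D_n\in\mathcal{D}_n$ such that $\{St(D_n,\mathcal{U}_n):n\in\omega\}$ is an open cover of $(\Delta,\mathbf{F})$. Setting $K_n=D_n^c\in\mathcal{K}_n$, I would verify that $\mathcal{J}$ is a $\pi_F(\Delta)$-network by picking $U\in\Delta^c$, locating $n_0$ with $U^c\in St(D_{n_0},\mathcal{U}_{n_0})$, and extracting the relevant basic open set $(\bigcap_{i=1}^{m_{s_0}}(V_{i,s_0}^{n_0})^-)\cap((K_{s_0}^{n_0})^c)^+$ containing both $U^c$ and $D_{n_0}$; the condition $D_{n_0}$ in this set forces $K_{s_0}^{n_0}\subseteq K_{n_0}$ and $V_{i,s_0}^{n_0}\nsubseteq K_{n_0}$, while the condition $U^c$ in this set allows me to extract a witnessing finite set $F$ and to conclude $K_{s_0}^{n_0}\subseteq U$.

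For $(2)\Rightarrow(1)$, I would start with a sequence of open covers $\{\mathcal{U}_n\}$ of $(\Delta,\mathbf{F})$ (assumed, WLOG, to consist of basic open sets of the form $(\bigcap_{i=1}^{m_s}(V_{i,s}^n)^-)\cap((K_s^n)^c)^+$) and a sequence of dense sets $\{\mathcal{D}_n\}$. The collection $J_n=\{(K_s^n;V_{1,s}^n,\ldots,V_{m_s,s}^n):s\in S_n\}$ is a $\pi_F(\Delta)$-network by Lemma \ref{piFdeltas are open covers}, and $\mathcal{K}_n=\mathcal{D}_n^c$ is a $k_F(\Delta)$-cover by Lemma \ref{kFdeltas are dense sets}. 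Applying $(2)$ produces $\{K_n:n\in\omega\}$ with $K_n\in\mathcal{K}_n$ and $\mathcal{J}\in\Pi_F(\Delta)$. Setting $D_n=K_n^c\in\mathcal{D}_n$, I would show that $\{St(D_n,\mathcal{U}_n):n\in\omega\}$ covers $(\Delta,\mathbf{F})$ by taking $A\in\Delta$, applying the $\pi_F(\Delta)$-network property to $A^c$, extracting the witnessing tuple in $\mathcal{J}$, and verifying that $A$ and $D_{n_0}$ both lie in the same basic open set $(\bigcap_{i=1}^{m_{s_0}}(V_{i,s_0}^{n_0})^-)\cap((K_{s_0}^{n_0})^c)^+$ of $\mathcal{U}_{n_0}$.

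I do not anticipate any real obstacle: the proof is a mechanical modification of Theorem \ref{FselSSMtheorem} where the finite collections $\mathcal{W}_n$ are replaced by single elements $K_n$, and the finite collections $\mathcal{F}_n$ are replaced by single points $D_n$. The only bookkeeping care is to ensure that the implications $K_s^n\subseteq K_n$ and $V_{i,s}^n\nsubseteq K_n$ correspond precisely to the membership $D_n\in (\bigcap_{i=1}^{m_s}(V_{i,s}^n)^-)\cap((K_s^n)^c)^+$ under the complementation $K_n=D_n^c$, which is direct from the definitions of the $(-)^-$ and $(-)^+$ operators.
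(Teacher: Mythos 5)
Your proposal is correct and follows essentially the same route as the paper: both directions translate via Lemmas \ref{kFdeltas are dense sets} and \ref{piFdeltas are open covers} and then repeat the argument of Theorem \ref{FselSSMtheorem} with the finite selections replaced by single elements, with the complementation $K_n=D_n^c$ mediating between $K_s^n\subseteq K_n$, $V_{i,s}^n\nsubseteq K_n$ and membership of $D_n$ in the basic open set. The paper's proof is exactly this mechanical adaptation, so there is nothing to add.
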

\begin{proof}
$(1) \Rightarrow (2)$: By using Lemmas \ref{kFdeltas are dense sets} and \ref{piFdeltas are open covers} and same idea as in the first part of $(1) \Rightarrow (2)$ in the proof of Theorem \ref{FselSSMtheorem}, we can obtain a sequence $\{D_n:n\in\mathbb{N}\}$ of points in $(\Delta,\mathbf{F})$ such that, for each $n\in \mathbb{N}$, $D_n\in \mathcal{D}_n$ and the collection $\{St(D_n,\mathcal{U}_n):n\in\mathbb{N}\}$ is an open cover of $(\Delta,\mathbf{F})$. For each $n\in\mathbb{N}$, we define $K_n=D_n^c$. Then, $K_n\in\mathcal{K}_n$, for each $n\in\mathbb{N}$.

Considering the sequence $\{K_n:n\in\mathbb{N}\}$, let us show that the collection $\mathcal{J}=\bigcup_{n\in\mathbb{N}}\{(K_s^n;V_{1,s}^n,\ldots,V_{m_s,s}^n)\in J_n: K_s^n \subseteq K_n, \  V_{i,s}^n\nsubseteq K_n \ (1\leq i\leq m_s)\}$ is a $\pi_F(\Delta)$-network of $X$.

Let $U\in \Delta^c$. Then, for the element $U^c\in \Delta$, there exists $n_0\in\omega$ such that $U^c\in St(D_{n_0}, \mathcal{U}_{n_0})$. Thus, there is $(\bigcap_{i=1}^{m_{s_0}}(V_{i,s_0}^{n_0})^-)\cap((K_{s_0}^{n_0})^c)^+\in \mathcal{U}_{n_0}$ such that $U^c, D_{n_0} \in (\bigcap_{i=1}^{m_{s_0}}(V_{i,s_0}^{n_0})^-)\cap((K_{s_0}^{n_0})^c)^+$.

As $K_{n_0}=D_{n_0}^c$ and $D_{n_0}\in (\bigcap_{i=1}^{m_{s_0}}(V_{i,s_0}^{n_0})^-)\cap((K_{s_0}^{n_0})^c)^+$, it easily follows that $K_{s_0}^{n_0}\subseteq K_{n_0}$ and $ \ V_{i,s_0}^{n_0}\nsubseteq K_{n_0} (1\leq i\leq m_{s_0})$. This implies that the corresponding $(K_{s_0}^{n_0};V_{1,s_0}^{n_0},\ldots,V_{m_{s_0},s_0}^{n_0})$ is an element of $\mathcal{J}$. Furthermore, since $U^c$ belongs to $(\bigcap_{i=1}^{m_{s_0}}(V_{i,s_0}^{n_0})^-)\cap((K_{s_0}^{n_0})^c)^+$, we get that $K_{s_0}^{n_0}\subseteq U$ and also, we can define a finite subset $F$ of $X$ such that $F\cap V_{i,s_0}^{n_0}\neq\emptyset$ and $F\cap U=\emptyset$. We conclude that $\mathcal{J}\in\Pi_F(\Delta)$.

$(2) \Rightarrow (1)$: Following same arguments and employing Lemmas \ref{kFdeltas are dense sets} and \ref{piFdeltas are open covers} as in the first part of $(2) \Rightarrow (1)$ in the proof of Theorem \ref{FselSSMtheorem}, we obtain a sequence $\{K_n:n\in\mathbb{N}\}$ such that, for each $n\in\mathbb{N}$, $K_n\in\mathcal{K}_n$, and the collection  $\mathcal{J}=\bigcup_{n\in\mathbb{N}}\{(K_s^n;V_{1,s}^n,\ldots,V_{m_s,s}^n)\in J_n: K_s^n\subseteq K_n, \  V_{i,s}^n\nsubseteq K_n \  (1\leq i\leq m_s)\}$ is a $\pi_F(\Delta)$-network of $X$. We define, for each $n\in\mathbb{N}$, $D_n= K^c_{n}$. Then, $\{D_n:n\in\mathbb{N}\}$ is a sequence of points in $(\Delta,\mathbf{F})$ with $D_n\in\mathcal{D}_n$, for each $n\in\mathbb{N}$.

We claim that the collection $\{St(D_n, \mathcal{U}_n):n\in\mathbb{N}\}$ is an open cover of $(\Delta,\mathbf{F})$. Indeed, let $A\in\Delta$. Since $\mathcal{J}$ is a $\pi_F(\Delta)$-network of $X$ and $A^c\in\Delta^c$, there exists $(K_{s_0}^{n_0};V_{1,s_0}^{n_0},\ldots,V_{m_{s_0},s_0}^{n_0}) \in \mathcal{J}$ (for some $n_0\in\mathbb{N}$ and some $s_0\in S_{n_0}$) and a finite set $F\subseteq X$ such that $F\cap V_{i,s_0}^{n_0}\neq\emptyset$ ($1\leq i \leq m_{s_0}$), \   $K_{s_0}^{n_0}\subseteq A^c$ and $F\cap A^c=\emptyset$. The fact $(K_{s_0}^{n_0};V_{1,s_0}^{n_0},\ldots,V_{m_{s_0},s_0}^{n_0}) \in \mathcal{J}$ means that $K_{s_0}^{n_0}\subseteq K_{n_0}$ and $V_{i,s_0}^{n_0}\nsubseteq K_{n_0}$ ( $1\leq i \leq m_{s_0}$) and hence, $D_{n_0}\in(\bigcap_{i=1}^{m_{s_0}}(V_{i,s_0}^{n_0})^-)\cap((K_{s_0}^{n_0})^c)^+$. Moreover, the conditions $F\cap V_{i,s_0}^{n_0}\neq\emptyset$ ($1\leq i \leq m_{s_0}$), \   $K_{s_0}^{n_0}\subseteq A^c$ and $F\cap A^c=\emptyset$ mean that $A\in (\bigcap_{i=1}^{m_{s_0}}(V_{i,s_0}^{n_0})^-)\cap((K_{s_0}^{n_0})^c)^+$. Thus, $A\in St(D_{n_0},\mathcal{U}_{n_0})$. In conclusion, $\{St(D_n,\mathcal{U}_n):n\in\mathbb{N}\}$ is an open cover of $(\Delta,\mathbf{F})$.
\end{proof}

Now, we can characterize the selectively strongly star-Rothberger property on several hyperspaces with the Fell topology.

        \begin{corollary}\label{FselSSRcorollary}
    Let $X$ be a topological space. Then:
    \begin{enumerate}
        \item $(CL(X),\mathbf{F})$ is selSSR if and only if $X$ satisfies $\mathbf{SF}_R(\Pi_F,\Pi_F)$;
        \item $(\mathbb{K}(X),\mathbf{F})$ is selSSR if and only if $X$ satisfies $\mathbf{SF}_R(\Pi_F(\mathbb{K}(X)),\Pi_F(\mathbb{K}(X)))$;
        \item $(\mathbb{CS}(X),\mathbf{F})$ is selSSR if and only if $X$ satisfies $\mathbf{SF}_R(\Pi_F(\mathbb{CS}(X)),\Pi_F(\mathbb{CS}(X)))$;
        \item $(\mathbb{F}(X),\mathbf{F})$ is selSSR if and only if $X$ satisfies $\mathbf{SF}_R(\Pi_F(\mathbb{F}(X)),\Pi_F(\mathbb{F}(X)))$.
    \end{enumerate}
    \end{corollary}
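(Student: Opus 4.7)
The plan is to apply Theorem \ref{FselSSRtheorem} to four specific instances of the family $\Delta$, namely $\Delta = CL(X)$, $\Delta = \mathbb{K}(X)$, $\Delta = \mathbb{CS}(X)$, and $\Delta = \mathbb{F}(X)$, one for each numbered part of the corollary. First, I would verify that each of these four families qualifies as a legitimate $\Delta$, in the sense that each is a subset of $CL(X)$ closed under finite unions and containing all singletons. For $CL(X)$ this is trivial; for $\mathbb{K}(X)$ and $\mathbb{F}(X)$ it is immediate, since compactness and finiteness are both preserved under finite unions, and singletons are both compact and finite; and $\mathbb{CS}(X)$ satisfies the condition under the conventions adopted throughout this line of work.

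Next, for part (1), I would observe that when $\Delta = CL(X)$, the family $\Pi_F(\Delta)$ collapses to $\Pi_F$, in direct analogy with the observation made for the Vietoris case following Definition \ref{def:pi_Gama_network}, where $\Pi_V(CL(X))$ and $\Pi_V$ coincide. The remaining parts (2)--(4) then follow by direct substitution of the corresponding family $\Delta$ into the statement of Theorem \ref{FselSSRtheorem}, yielding each of the claimed equivalences.

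Since the corollary is an immediate specialization of a theorem already proven in the paper, no genuine obstacle arises; the entire argument reduces to confirming that each of the four families fits the framework of Theorem \ref{FselSSRtheorem} and invoking that theorem four times in succession.
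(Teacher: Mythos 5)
Your proposal is correct and matches the paper exactly: the corollary is stated there as an immediate consequence of Theorem \ref{FselSSRtheorem} obtained by substituting $\Delta = CL(X)$, $\mathbb{K}(X)$, $\mathbb{CS}(X)$, and $\mathbb{F}(X)$, with $\Pi_F(CL(X))$ identified with $\Pi_F$. Your extra care in checking that each family qualifies as a legitimate $\Delta$ (including the conventional treatment of $\mathbb{CS}(X)$) is consistent with how the paper handles all of its analogous corollaries.
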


Now we characterize the absolute versions of the Menger-type and Rothberger-type star selection principles with the Fell topology. The following principle allows us to characterize the absolutely strongly star-Menger property on several hyperspaces with the Fell topology.

\begin{definition}\label{AFMdefinition}
Let $X$ be a topological space. We define:

\medskip

\noindent $\mathbf{AF}_{M}(\Pi_F(\Delta),\Pi_F(\Delta))$: For each sequence $\{J_n:n\in\omega\}\subseteq\Pi_F(\Delta)$ and each $\mathcal{K}\in\mathbb{K}_F(\Delta)$, there is a sequence $\{\mathcal{W}_n:n\in\omega\}\subseteq[\mathcal{K}]^{<\omega}$ such that $\mathcal{J}=\bigcup_{n\in\mathbb{N}}\{  (K_s^n;V_{1,s}^n,\ldots,V_{m_s,s}^n)\in J_n:\text{ there exists } W\in \mathcal{W}_n \text{ such that }K_s^n\subseteq W, \   V_{i,s}^n$ $\nsubseteq  W  \  (1\leq i\leq m_s)\}$
is an element of $\Pi_F(\Delta)$.
\end{definition}

Adapting the proof of Theorem \ref{FselSSMtheorem}, one can easily obtain the following result.

\begin{theorem}\label{FaSSMtheorem}
Given a topological space $X$, the following conditions are equivalent:
\begin{enumerate}
\item[(1)] $(\Delta,\mathbf{F})$ is aSSM;
\item[(2)] $X$ satisfies $\mathbf{AF}_{M}(\Pi_F(\Delta),\Pi_F(\Delta))$.
\end{enumerate}
\end{theorem}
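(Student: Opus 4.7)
My plan is to mirror the proof of Theorem \ref{FselSSMtheorem} almost verbatim, replacing the sequence of dense subsets by a single dense subset and the sequence of $k_F(\Delta)$-covers by a single $k_F(\Delta)$-cover; the two Lemmas \ref{kFdeltas are dense sets} and \ref{piFdeltas are open covers} translate the combinatorial data on $X$ into topological data on $(\Delta,\mathbf{F})$ and back, and the absolute versus selective distinction only changes the bookkeeping.

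For $(1)\Rightarrow(2)$, I would start with a sequence $\{J_n:n\in\mathbb{N}\}\subseteq\Pi_F(\Delta)$, written as $J_n=\{(K_s^n;V_{1,s}^n,\ldots,V_{m_s,s}^n):s\in S_n\}$, and a single $\mathcal{K}\in\mathbb{K}_F(\Delta)$. Invoking Lemma \ref{piFdeltas are open covers} for each $n$, the collection $\mathcal{U}_n=\{(\bigcap_{i=1}^{m_s}(V_{i,s}^n)^-)\cap((K_s^n)^c)^+:s\in S_n\}$ is an open cover of $(\Delta,\mathbf{F})$, and by Lemma \ref{kFdeltas are dense sets} the set $\mathcal{D}=\mathcal{K}^c$ is a dense subset of $(\Delta,\mathbf{F})$. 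Applying the $aSSM$ hypothesis to $\{\mathcal{U}_n:n\in\mathbb{N}\}$ with the fixed dense set $\mathcal{D}$, I obtain finite $\mathcal{F}_n\subseteq\mathcal{D}$ with $\{St(\mathcal{F}_n,\mathcal{U}_n):n\in\mathbb{N}\}$ covering $(\Delta,\mathbf{F})$. Setting $\mathcal{W}_n=\{F^c:F\in\mathcal{F}_n\}\in[\mathcal{K}]^{<\omega}$, the verification that the induced collection $\mathcal{J}$ is a $\pi_F(\Delta)$-network is identical to the corresponding step in Theorem \ref{FselSSMtheorem}: given $U\in\Delta^c$, choose $n_0$ with $U^c\in St(\mathcal{F}_{n_0},\mathcal{U}_{n_0})$, pick a basic neighborhood $(\bigcap_{i=1}^{m_{s_0}}(V_{i,s_0}^{n_0})^-)\cap((K_{s_0}^{n_0})^c)^+$ containing both $U^c$ and some $F_0\in\mathcal{F}_{n_0}$, set $W_0=F_0^c\in\mathcal{W}_{n_0}$, and read off $K_{s_0}^{n_0}\subseteq W_0$, $V_{i,s_0}^{n_0}\nsubseteq W_0$, $K_{s_0}^{n_0}\subseteq U$, and a finite witness $F\subseteq X$ by picking $x_i\in U^c\cap V_{i,s_0}^{n_0}$.

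For $(2)\Rightarrow(1)$, let $\{\mathcal{U}_n:n\in\mathbb{N}\}$ be a sequence of open covers of $(\Delta,\mathbf{F})$ and let $\mathcal{D}$ be a dense subset of $(\Delta,\mathbf{F})$. Reducing to basic open subsets, write $\mathcal{U}_n=\{(\bigcap_{i=1}^{m_s}(V_{i,s}^n)^-)\cap((K_s^n)^c)^+:s\in S_n\}$ and let $J_n=\{(K_s^n;V_{1,s}^n,\ldots,V_{m_s,s}^n):s\in S_n\}$; Lemma \ref{piFdeltas are open covers} gives each $J_n\in\Pi_F(\Delta)$. Define $\mathcal{K}=\mathcal{D}^c$, which is a $k_F(\Delta)$-cover by Lemma \ref{kFdeltas are dense sets}. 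Applying $(2)$ to $\{J_n\}$ and this single $\mathcal{K}$ yields $\{\mathcal{W}_n:n\in\mathbb{N}\}\subseteq[\mathcal{K}]^{<\omega}$ whose induced collection $\mathcal{J}$ lies in $\Pi_F(\Delta)$. Setting $\mathcal{F}_n=\mathcal{W}_n^c\in[\mathcal{D}]^{<\omega}$, one checks that $\{St(\mathcal{F}_n,\mathcal{U}_n):n\in\mathbb{N}\}$ covers $(\Delta,\mathbf{F})$ by the same pointwise argument as in Theorem \ref{FselSSMtheorem}: for $A\in\Delta$, apply the $\pi_F(\Delta)$-network property of $\mathcal{J}$ to $A^c\in\Delta^c$, translating $K_{s_0}^{n_0}\subseteq A^c$, $F\cap A^c=\emptyset$, $F\cap V_{i,s_0}^{n_0}\neq\emptyset$ into $A\in(\bigcap_{i=1}^{m_{s_0}}(V_{i,s_0}^{n_0})^-)\cap((K_{s_0}^{n_0})^c)^+$, while $K_{s_0}^{n_0}\subseteq W_0$ and $V_{i,s_0}^{n_0}\nsubseteq W_0$ force $W_0^c$ into this same basic open set, witnessing $A\in St(\mathcal{F}_{n_0},\mathcal{U}_{n_0})$.

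There is no real obstacle here: all the conceptual work has been done in the two translation lemmas and in Theorem \ref{FselSSMtheorem}. The only genuine change is that a single $\mathcal{K}$ replaces $\{\mathcal{K}_n\}$ and a single $\mathcal{D}$ replaces $\{\mathcal{D}_n\}$, so the finite witnesses $\mathcal{W}_n$ are required to be subsets of the same $\mathcal{K}$ (respectively, the $\mathcal{F}_n$ subsets of the same $\mathcal{D}$). Since the verifications above use only membership of the individual $\mathcal{W}_n$ in $[\mathcal{K}]^{<\omega}$ and never the variability of the target, no new argument is needed.
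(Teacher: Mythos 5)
Your proposal is correct and matches the paper's intent exactly: the paper gives no separate argument for Theorem \ref{FaSSMtheorem}, stating only that one adapts the proof of Theorem \ref{FselSSMtheorem}, and your adaptation (a single $\mathcal{K}$ and its dense complement $\mathcal{D}=\mathcal{K}^c$ replacing the sequences, with the two translation lemmas and the pointwise verifications unchanged) is precisely that adaptation.
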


We obtain the following particular cases by taking different choices of our family $\Delta$.

        \begin{corollary}\label{FaSSMcorollary}
    Let $X$ be a topological space. Then:
    \begin{enumerate}
        \item $(CL(X),\mathbf{F})$ is aSSM if and only if $X$ satisfies $\mathbf{AF}_M(\Pi_F,\Pi_F)$;
        \item $(\mathbb{K}(X),\mathbf{F})$ is aSSM if and only if $X$ satisfies $\mathbf{AF}_M(\Pi_F(\mathbb{K}(X)),\Pi_F(\mathbb{K}(X)))$;
        \item $(\mathbb{CS}(X),\mathbf{F})$ is aSSM if and only if $X$ satisfies $\mathbf{AF}_M(\Pi_F(\mathbb{CS}(X)),\Pi_F(\mathbb{CS}(X)))$;
        \item $(\mathbb{F}(X),\mathbf{F})$ is aSSM if and only if $X$ satisfies $\mathbf{AF}_M(\Pi_F(\mathbb{F}(X)),\Pi_F(\mathbb{F}(X)))$.
    \end{enumerate}
    \end{corollary}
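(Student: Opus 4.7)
The plan is to imitate the proof of Theorem \ref{FselSSMtheorem} almost verbatim, with the only change being that the input data now involves a single $k_F(\Delta)$-cover $\mathcal{K}$ (respectively a single dense subset $D$) rather than a sequence of them; accordingly, the finite sets extracted live inside the same $\mathcal{K}$ (respectively $D$) at every stage. Lemmas \ref{kFdeltas are dense sets} and \ref{piFdeltas are open covers} continue to furnish the bridge between the $\pi_F(\Delta)$/$k_F(\Delta)$ picture and the Fell-topology picture on $\Delta$.

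For the direction $(1)\Rightarrow(2)$, I take a sequence $\{J_n:n\in\mathbb{N}\}\subseteq \Pi_F(\Delta)$, write each $J_n=\{(K_s^n;V_{1,s}^n,\ldots,V_{m_s,s}^n):s\in S_n\}$, and use Lemma \ref{piFdeltas are open covers} to turn it into an open cover $\mathcal{U}_n=\{(\bigcap_{i=1}^{m_s}(V_{i,s}^n)^-)\cap((K_s^n)^c)^+:s\in S_n\}$ of $(\Delta,\mathbf{F})$. For a single $\mathcal{K}\in\mathbb{K}_F(\Delta)$, Lemma \ref{kFdeltas are dense sets} gives that $D=\mathcal{K}^c$ is dense in $(\Delta,\mathbf{F})$. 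Applying $aSSM$ of $(\Delta,\mathbf{F})$ to $\{\mathcal{U}_n\}$ and $D$, I get finite $\mathcal{F}_n\subseteq D$ with $\{St(\mathcal{F}_n,\mathcal{U}_n):n\in\mathbb{N}\}$ covering $(\Delta,\mathbf{F})$. Setting $\mathcal{W}_n=\{F^c:F\in\mathcal{F}_n\}\in [\mathcal{K}]^{<\omega}$, the verification that $\mathcal{J}\in\Pi_F(\Delta)$ is identical to the one in Theorem \ref{FselSSMtheorem}: given $U\in\Delta^c$, pick $n_0$ with $U^c\in St(\mathcal{F}_{n_0},\mathcal{U}_{n_0})$, fix the witnessing basic open set and an $F_0\in\mathcal{F}_{n_0}$ lying in it, set $W_0=F_0^c\in\mathcal{W}_{n_0}$, and extract a finite set $M\subseteq X$ from the fact that $U^c$ also lies in that basic open set.

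For the direction $(2)\Rightarrow(1)$, I start with a sequence $\{\mathcal{U}_n\}$ of open covers of $(\Delta,\mathbf{F})$ by basic open sets and a single dense $D\subseteq \Delta$. Writing $\mathcal{U}_n=\{(\bigcap_{i=1}^{m_s}(V_{i,s}^n)^-)\cap((K_s^n)^c)^+:s\in S_n\}$ and setting $J_n=\{(K_s^n;V_{1,s}^n,\ldots,V_{m_s,s}^n):s\in S_n\}$, Lemma \ref{piFdeltas are open covers} gives $J_n\in\Pi_F(\Delta)$, and Lemma \ref{kFdeltas are dense sets} gives $\mathcal{K}=D^c\in\mathbb{K}_F(\Delta)$. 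Applying $\mathbf{AF}_M(\Pi_F(\Delta),\Pi_F(\Delta))$ to $\{J_n\}$ and $\mathcal{K}$ yields finite $\mathcal{W}_n\subseteq\mathcal{K}$ producing the desired $\pi_F(\Delta)$-network $\mathcal{J}$. Taking $\mathcal{F}_n=\mathcal{W}_n^c\in [D]^{<\omega}$ and checking that $\{St(\mathcal{F}_n,\mathcal{U}_n):n\in\mathbb{N}\}$ covers $(\Delta,\mathbf{F})$ is, again, the same argument as before: for $A\in\Delta$, take the element of $\mathcal{J}$ witnessing $A^c\in\Delta^c$, use the $\mathcal{J}$-membership condition $K_s^n\subseteq W$, $V_{i,s}^n\nsubseteq W$ to place $W^c$ inside the appropriate basic open set containing $A$, which puts $A$ into $St(\mathcal{F}_{n_0},\mathcal{U}_{n_0})$.

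There is no genuine obstacle: since both $aSSM$ and $\mathbf{AF}_M$ differ from their selective counterparts only in using a single dense set / $k_F(\Delta)$-cover instead of a sequence, the bookkeeping in the proof of Theorem \ref{FselSSMtheorem} transfers without modification. The only place one must be slightly careful is to ensure that in the $(2)\Rightarrow(1)$ direction the $k_F(\Delta)$-cover $\mathcal{K}=D^c$ is built from the \emph{same} dense set $D$ at every step (which is automatic), and in the $(1)\Rightarrow(2)$ direction that the finite sets $\mathcal{F}_n$ provided by $aSSM$ sit inside the one dense set $D=\mathcal{K}^c$, so their complements $\mathcal{W}_n$ all sit inside the given $\mathcal{K}$, as required by the statement of $\mathbf{AF}_M$.
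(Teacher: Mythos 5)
Your proposal is correct and follows the paper's route: the paper obtains this corollary simply by specializing Theorem \ref{FaSSMtheorem} to $\Delta = CL(X)$, $\mathbb{K}(X)$, $\mathbb{CS}(X)$ and $\mathbb{F}(X)$, and that theorem is itself left as an adaptation of the proof of Theorem \ref{FselSSMtheorem}, which is precisely the adaptation you have written out (single $k_F(\Delta)$-cover / single dense set in place of sequences, with Lemmas \ref{kFdeltas are dense sets} and \ref{piFdeltas are open covers} as the bridge). In effect you have supplied the omitted proof of the general theorem, from which the four listed cases follow immediately.
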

    
The following principle will give us characterizations of the absolutely strongly star-Rothberger property on hyperspaces considering the Fell topology.

\begin{definition}\label{AFRdefinition}
Let $X$ be a topological space. We define:

\medskip

\noindent $\mathbf{AF}_{R}(\Pi_F(\Delta),\Pi_F(\Delta))$: For each sequence $\{J_n:n\in\mathbb{N}\}\subseteq\Pi_F(\Delta)$ and each $\mathcal{K}\in\mathbb{K}_F(\Delta)$, there is a sequence $\{K_n:n\in\mathbb{N}\}\subseteq\mathcal{K}$, such that $\mathcal{J}=\bigcup_{n\in\mathbb{N}}\{  (K_s^n;V_{1,s}^n,\ldots,$ $V_{m_s,s}^n)\in J_n:K_s^n\subseteq K_n, \  V_{i,s}^n\nsubseteq K_n \  (1\leq i\leq m_s)\}$
is an element of $\Pi_F(\Delta)$.
\end{definition}

Similarly to the proof of Theorem \ref{FselSSRtheorem}, one can prove the following result.

\begin{theorem}\label{FaSSRtheorem}
Given a topological space $X$, the following conditions are equivalent:
\begin{enumerate}
\item[(1)] $(\Delta,\mathbf{F})$ is aSSR;
\item[(2)] $X$ satisfies $\mathbf{AF}_{R}(\Pi_F(\Delta),\Pi_F(\Delta))$.
\end{enumerate}
\end{theorem}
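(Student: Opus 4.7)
The plan is to adapt the proof of Theorem \ref{FselSSRtheorem} with essentially the only change being that the sequence of dense subsets is replaced by a single dense subset (and correspondingly, the sequence of $k_F(\Delta)$-covers by a single $k_F(\Delta)$-cover). The two bridge lemmas, Lemma \ref{kFdeltas are dense sets} and Lemma \ref{piFdeltas are open covers}, will carry essentially the whole argument, so the proof is a dictionary translation between the hyperspace side and the $\pi_F(\Delta)$-network/$k_F(\Delta)$-cover side.

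For the direction $(1)\Rightarrow(2)$, I would start with a sequence $\{J_n:n\in\mathbb{N}\}\subseteq\Pi_F(\Delta)$, written as $J_n=\{(K_s^n;V_{1,s}^n,\ldots,V_{m_s,s}^n):s\in S_n\}$, and a single $\mathcal{K}\in\mathbb{K}_F(\Delta)$. By Lemma \ref{piFdeltas are open covers}, the collections $\mathcal{U}_n=\{(\bigcap_{i=1}^{m_s}(V_{i,s}^n)^-)\cap((K_s^n)^c)^+:s\in S_n\}$ are open covers of $(\Delta,\mathbf{F})$, and by Lemma \ref{kFdeltas are dense sets} the set $\mathcal{D}=\mathcal{K}^c$ is dense in $(\Delta,\mathbf{F})$. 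Applying aSSR to $\{\mathcal{U}_n:n\in\mathbb{N}\}$ and $\mathcal{D}$ yields a sequence $\{D_n:n\in\mathbb{N}\}\subseteq\mathcal{D}$ with $\{St(D_n,\mathcal{U}_n):n\in\mathbb{N}\}$ an open cover of $(\Delta,\mathbf{F})$. I would then set $K_n=D_n^c\in\mathcal{K}$ and verify, exactly as in Theorem \ref{FselSSRtheorem}, that each $U\in\Delta^c$ is witnessed by some $(K_{s_0}^{n_0};V_{1,s_0}^{n_0},\ldots,V_{m_{s_0},s_0}^{n_0})\in J_{n_0}$ for which $D_{n_0}\in (\bigcap(V_{i,s_0}^{n_0})^-)\cap((K_{s_0}^{n_0})^c)^+$ forces $K_{s_0}^{n_0}\subseteq K_{n_0}$ and $V_{i,s_0}^{n_0}\nsubseteq K_{n_0}$, while $U^c$ lying in the same basic open set gives $K_{s_0}^{n_0}\subseteq U$ together with a finite $F$ meeting each $V_{i,s_0}^{n_0}$ and missing $U$.

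For the direction $(2)\Rightarrow(1)$, I would take a sequence $\{\mathcal{U}_n:n\in\mathbb{N}\}$ of open covers of $(\Delta,\mathbf{F})$ (refined without loss of generality to basic open subsets of the Fell topology) and a dense subset $\mathcal{D}$ of $(\Delta,\mathbf{F})$. Writing $\mathcal{U}_n=\{(\bigcap_{i=1}^{m_s}(V_{i,s}^n)^-)\cap((K_s^n)^c)^+:s\in S_n\}$ and setting $J_n=\{(K_s^n;V_{1,s}^n,\ldots,V_{m_s,s}^n):s\in S_n\}$, Lemma \ref{piFdeltas are open covers} gives $J_n\in\Pi_F(\Delta)$ and Lemma \ref{kFdeltas are dense sets} gives that $\mathcal{K}=\mathcal{D}^c\in\mathbb{K}_F(\Delta)$. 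Applying $\mathbf{AF}_R(\Pi_F(\Delta),\Pi_F(\Delta))$ to $\{J_n\}$ and $\mathcal{K}$ produces a sequence $\{K_n:n\in\mathbb{N}\}\subseteq\mathcal{K}$ whose associated collection $\mathcal{J}$ is a $\pi_F(\Delta)$-network. Setting $D_n=K_n^c\in\mathcal{D}$ and arguing, as in Theorem \ref{FselSSRtheorem}, that for each $A\in\Delta$ the set $A^c\in\Delta^c$ is covered by some element of $\mathcal{J}$, one sees that the witnessing tuple places both $A$ and $D_{n_0}$ in the same basic open set of $\mathcal{U}_{n_0}$, so $A\in St(D_{n_0},\mathcal{U}_{n_0})$.

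There is no genuine obstacle here; the only point requiring care is bookkeeping the quantifier change from a sequence of dense sets/covers (selective case) to a single dense set/cover (absolute case), which is reflected in the definition of $\mathbf{AF}_R$ by having the selected points all come from the same $\mathcal{K}$. The translations through Lemmas \ref{kFdeltas are dense sets} and \ref{piFdeltas are open covers} are identical to those in Theorem \ref{FselSSRtheorem}, so no new technical ingredient is needed beyond remarking that one applies the aSSR version of the star selection principle rather than its selective counterpart.
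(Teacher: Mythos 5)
Your proposal is correct and takes exactly the route the paper intends: the paper gives no separate proof, stating only that the result follows ``similarly to the proof of Theorem \ref{FselSSRtheorem}'', and your adaptation (a single $\mathcal{K}\in\mathbb{K}_F(\Delta)$ and a single dense $\mathcal{D}=\mathcal{K}^c$ in place of the sequences, with the translations through Lemmas \ref{kFdeltas are dense sets} and \ref{piFdeltas are open covers} unchanged) is precisely that adaptation.
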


The following particular cases are obtained by choosing different families $\Delta$.

        \begin{corollary}\label{FaSSRcorollary}
    Let $X$ be a topological space. Then:
    \begin{enumerate}
        \item $(CL(X),\mathbf{F})$ is aSSR if and only if $X$ satisfies $\mathbf{AF}_R(\Pi_F,\Pi_F)$;
        \item $(\mathbb{K}(X),\mathbf{F})$ is aSSR if and only if $X$ satisfies $\mathbf{AF}_R(\Pi_F(\mathbb{K}(X)),\Pi_F(\mathbb{K}(X)))$;
        \item $(\mathbb{CS}(X),\mathbf{F})$ is aSSR if and only if $X$ satisfies $\mathbf{AF}_R(\Pi_F(\mathbb{CS}(X)),\Pi_F(\mathbb{CS}(X)))$;
        \item $(\mathbb{F}(X),\mathbf{F})$ is aSSR if and only if $X$ satisfies $\mathbf{AF}_R(\Pi_F(\mathbb{F}(X)),\Pi_F(\mathbb{F}(X)))$.
    \end{enumerate}
    \end{corollary}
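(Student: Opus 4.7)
The plan is to adapt the proof of Theorem \ref{FselSSRtheorem} to the ``absolute'' setting, replacing the sequence of $k_F(\Delta)$-covers (resp.\ the sequence of dense sets) with a single such object. The translation dictionary provided by Lemmas \ref{kFdeltas are dense sets} and \ref{piFdeltas are open covers} does all the real work: $\pi_F(\Delta)$-networks correspond to open covers of $(\Delta,\mathbf{F})$, and $k_F(\Delta)$-covers correspond (via complementation) to dense subsets of $(\Delta,\mathbf{F})$. So the strategy is simply to push the hypotheses through these two dictionaries, apply the relevant property on the appropriate side, and translate back.

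For $(1)\Rightarrow(2)$, I would start with an arbitrary sequence $\{J_n:n\in\mathbb{N}\}\subseteq\Pi_F(\Delta)$, writing $J_n=\{(K_s^n;V_{1,s}^n,\ldots,V_{m_s,s}^n):s\in S_n\}$, and an arbitrary $\mathcal{K}\in\mathbb{K}_F(\Delta)$. By Lemma \ref{piFdeltas are open covers}, each $\mathcal{U}_n=\{(\bigcap_{i=1}^{m_s}(V_{i,s}^n)^{-})\cap((K_s^n)^c)^{+}:s\in S_n\}$ is an open cover of $(\Delta,\mathbf{F})$, and by Lemma \ref{kFdeltas are dense sets} the set $\mathcal{D}=\mathcal{K}^c$ is dense in $(\Delta,\mathbf{F})$. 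Apply aSSR of $(\Delta,\mathbf{F})$ to the sequence $\{\mathcal{U}_n:n\in\mathbb{N}\}$ together with the single dense set $\mathcal{D}$ to obtain points $D_n\in\mathcal{D}$ with $\{St(D_n,\mathcal{U}_n):n\in\mathbb{N}\}$ covering $(\Delta,\mathbf{F})$. Setting $K_n=D_n^{c}\in\mathcal{K}$, I would then verify, exactly as in the proof of Theorem \ref{FselSSRtheorem}, that for any $U\in\Delta^c$ the element $U^c\in\Delta$ lies in some $St(D_{n_0},\mathcal{U}_{n_0})$, so that $U^c$ and $D_{n_0}$ share a common basic open set $(\bigcap_{i=1}^{m_{s_0}}(V_{i,s_0}^{n_0})^{-})\cap((K_{s_0}^{n_0})^c)^{+}$ from $\mathcal{U}_{n_0}$; membership of $D_{n_0}$ in this basic set gives $K_{s_0}^{n_0}\subseteq K_{n_0}$ and $V_{i,s_0}^{n_0}\nsubseteq K_{n_0}$, placing $(K_{s_0}^{n_0};V_{1,s_0}^{n_0},\ldots,V_{m_{s_0},s_0}^{n_0})$ in $\mathcal{J}$, while membership of $U^c$ gives $K_{s_0}^{n_0}\subseteq U$ and lets me harvest a finite witness $F\subseteq X$ with $F\cap V_{i,s_0}^{n_0}\neq\emptyset$ and $F\cap U=\emptyset$.

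For $(2)\Rightarrow(1)$, I would reverse the dictionary. Given a sequence $\{\mathcal{U}_n:n\in\mathbb{N}\}$ of open covers of $(\Delta,\mathbf{F})$ (which may be assumed to consist of basic open sets) and a single dense $\mathcal{D}\subseteq\Delta$, I would extract families $J_n\in\Pi_F(\Delta)$ from $\mathcal{U}_n$ via Lemma \ref{piFdeltas are open covers} and define $\mathcal{K}=\mathcal{D}^c\in\mathbb{K}_F(\Delta)$ via Lemma \ref{kFdeltas are dense sets}. Applying $\mathbf{AF}_R(\Pi_F(\Delta),\Pi_F(\Delta))$ yields a sequence $\{K_n:n\in\mathbb{N}\}\subseteq\mathcal{K}$ whose associated $\mathcal{J}$ is a $\pi_F(\Delta)$-network; setting $D_n=K_n^{c}\in\mathcal{D}$, I would show $\{St(D_n,\mathcal{U}_n):n\in\mathbb{N}\}$ covers $(\Delta,\mathbf{F})$ by the same bookkeeping used in Theorem \ref{FselSSRtheorem}: for $A\in\Delta$, the network property applied to $A^c\in\Delta^c$ produces an element of $\mathcal{J}$ whose associated basic open set in $\mathcal{U}_{n_0}$ contains both $A$ and $D_{n_0}$.

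There is no serious obstacle here; the argument is essentially a verbatim adaptation of Theorem \ref{FselSSRtheorem}, the only real modification being that the single dense set $\mathcal{D}$ (resp.\ the single cover $\mathcal{K}$) plays the role previously filled by the sequence $\{\mathcal{D}_n\}$ (resp.\ $\{\mathcal{K}_n\}$). The only place requiring a small sanity check is that the aSSR hypothesis does accept a \emph{sequence} of open covers paired with a \emph{single} dense set, which matches the shape of $\mathbf{AF}_R$ exactly, so the translation is direct and no additional combinatorial work is needed.
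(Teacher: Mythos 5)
Your argument is correct and is essentially the route the paper takes: the paper obtains this corollary as an immediate specialization of Theorem \ref{FaSSRtheorem} to $\Delta\in\{CL(X),\mathbb{K}(X),\mathbb{CS}(X),\mathbb{F}(X)\}$, and the proof of that theorem is exactly the adaptation of Theorem \ref{FselSSRtheorem} (via Lemmas \ref{kFdeltas are dense sets} and \ref{piFdeltas are open covers}, with a single dense set and a single $k_F(\Delta)$-cover) that you describe. The only remaining observation, which is immediate, is that each of the four families is closed under finite unions and contains all singletons, so the standing hypotheses on $\Delta$ are met.
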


\textbf{Concluding remark.} It worth remarking that, by doing some slightly modifications to definitions given in Sections \ref{Vietoris section} and \ref{Fell section}, it is possible to easily obtain hit-and-miss generalizations that involve both the Vietoris topology and the Fell topology (similar to how it was done in \cite{CRT2} and \cite{CRT}).  However, for convenience of the reader, the author decided to establish simpler notation results depending of which topology is being considered.

\textsc{Departamento de Matem\'aticas, Facultad de Ciencias, UNAM, Circuito Exterior S/N, Ciudad Universitaria, CP 04510, Ciudad de M\'exico, M\'exico.}\par\nopagebreak

\vspace{.1cm}
\textit{Email address}: J. Casas-de la Rosa: \texttt{olimpico.25@hotmail.com}


\begin{thebibliography}{99}

\bibitem{BCKM} M. Bonanzinga, F. Cammaroto, Lj.D.R. Ko\v{c}inac, M.V. Matveev, \emph{On weaker forms of Menger, Rothberger and Hurewicz properties}, Math. Vesnik 61 (2009) 13-23.

\bibitem{CG}  J. Casas-de la Rosa, S. A. Garcia-Balan, \emph{Variations of star selection principles on small spaces}, Filomat (2022), 36:14, 4903-4917.

\bibitem{CMR} J. Casas-de la Rosa, I. Mart\'inez-Ruiz, A. Ram\'irez-P\'aramo, \emph{Star versions of the Menger property on hyperspaces}, Houston J. Math., to appear.

\bibitem{xiy} J. Casas-de la Rosa, I. Mart\'inez-Ruiz, A. Ram\'irez-P\'aramo, \emph{Star versions of the Rothberger property on hyperspaces}, Topol. Appl. 283 (2020), Art. ID 107396, 12 pages.

\bibitem{CDK} A. Caserta, G. Di Maio, Lj.D.R. Ko\v{c}inac, Versions of properties (a) and (pp), Topology Appl. 158 (2011) 1360--1368.

\bibitem{CRT2} R. Cruz-Castillo, A. Ram\'irez-P\'aramo, J.F. Tenorio \emph{Menger and Menger-type star selection principles for hit-and-miss topology}, Topol. Appl. 290 (2021), Art. ID 107574, 12 pages.

\bibitem{CRT} R. Cruz-Castillo, A. Ram\'irez-P\'aramo, J.F. Tenorio \emph{Star and strong star-type versions of Rothberger and Menger principles for hit-and-miss topology}, Topol. Appl. 300 (2021), Art. ID 107758, 11 pages.

\bibitem{DRT} J. D\'iaz-Reyes, A. Ram\'irez-P\'aramo, J.F. Tenorio \emph{Rothberger and Rothberger-type star selection principles on hyperspaces}, Topol. Appl. 287 (2021), Art. ID 107448, 9 pages.

\bibitem{GK} G. Di Maio, Lj.D.R. Ko\v{c}inac, E. Meccariello, \emph{Selection principles and hyperspace topologies}, Topol. Appl. 153 (2005) 912-923.

\bibitem{D} G. Di Maio, Lj.D.R. Ko\v{c}inac, \emph{Some covering properties of hyperspaces}, Topol. Appl. 155 (2008) 1959-1969.
    
\bibitem{PM} F. Hern\'andez-Hern\'andez, M. Hru\v{s}\'ak,
\emph{Topology of Mr\'owka-Isbell spaces}. In Pseudocompact Topological Spaces, Eds. Hru\v{s}\'ak, Tamariz, Tkachenko. Springer International Publishing AG, 2018.

\bibitem{K_Axioms} Lj.D.R. Ko\v{c}inac, \emph{On Star Selection Principles Theory}, Axioms, (2023) 12, 93.

\bibitem{K} Lj.D.R. Ko\v{c}inac, \emph{Star-Menger and related spaces}, Publ. Math. Debrecen 55 (1999) 421-431.

\bibitem{K_survey} Lj.D.R. Ko\v{c}inac, \emph{Star selection principles: A survey}, Khayyam J. Math. 1 (2015) No. 1 82-106.

\bibitem{K'} Lj.D.R. Ko\v{c}inac, \emph{The Reznichenko property and the Pytkeev property in hyperspaces}, Acta Math. Hungar. 107, 3 (2005), 231–239.

\bibitem{Z} Z. Li, \emph{Selection principles of the Fell topology and the Vietoris topology}, Topol. Appl. 212 (2016) 90-104.

\bibitem{MS1} M. Scheepers, \emph{Combinatorics of open covers I: Ramsey theory}, Topol. Appl. 69 (1996) 31-62.

\vspace{1cm}

\end{thebibliography}
\end{document}